\documentclass[12pt,twoside]{amsart}

\usepackage{amsmath}
\usepackage{amssymb}
\usepackage{amscd}
\usepackage{graphics}
\usepackage{graphicx}
\usepackage{epstopdf}
\usepackage{epsfig}

% set up margins and page layout

\setlength{\oddsidemargin}{0.05truein}
\setlength{\evensidemargin}{0.05truein}

\setlength{\textwidth}{6.4truein}
\setlength{\textheight}{9.0truein}

\setlength{\voffset}{-0.7truein}

% 
% \setlength{\headsep}{0.12truein}

%\setlength{\footskip}{0.25truein}

% 
% \flushbottom

%%%%%%%%%%%%%%%%%%%%%%%
% Allow More Matrix Columns
%%%%%%%%%%%%%%%%%%%%%%%

\setcounter{MaxMatrixCols}{20}

%%%%%%%%%%%%%%%%%%%%%%%
% My personal macros
%%%%%%%%%%%%%%%%%%%%%%%

\renewcommand{\bar}{\overline}

\newcommand{\CC}{\mathbb{C}}

\newcommand{\HH}{\mathbb{H}}

\newcommand{\NN}{\mathbb{N}}

\newcommand{\PP}{\mathbb{P}}

\newcommand{\RR}{\mathbb{R}}

\newcommand{\Cv}{\CC_v}

\newcommand{\ints}{{\mathcal O}}

\newcommand{\maxid}{{\mathcal M}}

\newcommand{\calA}{{\mathcal A}}

\newcommand{\calC}{{\mathcal C}}
\newcommand{\calD}{{\mathcal D}}

\newcommand{\calF}{{\mathcal F}}

\newcommand{\calJ}{{\mathcal J}}

\newcommand{\calO}{{\mathcal O}}
\newcommand{\calP}{{\mathcal P}}

\newcommand{\calR}{{\mathcal R}}

\newcommand{\calU}{{\mathcal U}}

\newcommand{\Fat}{\calF}
\newcommand{\Jul}{\calJ}

\newcommand{\PCv}{\PP^1(\Cv)}

\newcommand{\Pk}{\PP^1(k)}

\newcommand{\PKbar}{\PP^1(\overline{K})}

\newcommand{\Ber}{\textup{Ber}}
\newcommand{\HBer}{\HH_{\Ber}}

\newcommand{\PBerk}{\PP^1_{\Ber}}

\DeclareMathOperator{\PGL}{PGL}

\DeclareMathOperator{\charact}{char}

\DeclareMathOperator{\Supp}{Supp}

\DeclareMathOperator{\Jac}{Jac}

\DeclareMathOperator{\diam}{diam}

\newcommand{\Dbar}{\bar{D}}
\newcommand{\DBerk}{\calD}
\newcommand{\DbarBerk}{\bar{\calD}}

\newcommand{\dsps}{\displaystyle}

\theoremstyle{plain}
\newtheorem{thm}{Theorem}[section]

\newtheorem{lemma}[thm]{Lemma}

\newtheorem*{thmA}{Theorem A}
\newtheorem*{thmB}{Theorem B}

\theoremstyle{definition}
\newtheorem{defin}[thm]{Definition}

\newtheorem{claim}[thm]{Claim}

\theoremstyle{remark}
\newtheorem{remark}[thm]{Remark}

\numberwithin{equation}{section}

%%%%%%%%%%%%%%%%%%%%%%%%%%%%%%%%%%%%%%%%%%%%%%%%

% article info

\title[Non-archimedean Connected Julia Sets]
{Non-archimedean connected Julia sets with branching}
\date{October 1, 2014; revised April 6, 2015}
\subjclass[2010]{Primary: 37P40 Secondary: 37P50, 37P30}
\keywords{Berkovich space, invariant measure, ergodic theory}

\address[Bajpai,Benedetto,Kim,Marschall,Onul]
        {Amherst College \\ Amherst, MA 01002}
\address[Chen]
        {University of Illinois\\ Urbana, IL 68101}
\address[Xiao]
        {Brown University \\ Providence, RI 02912}
\author[Bajpai]{Dvij Bajpai}
\email[Bajpai]{dbajpai15@amherst.edu}
\author[Benedetto]{Robert~L. Benedetto}
\email[Benedetto]{rlbenedetto@amherst.edu}
\author[Chen]{Ruqian Chen}
\email[Chen]{rchen40@illinois.edu}
\author[Kim]{Edward Kim}
\email[Kim]{ejkim15@amherst.edu}
\author[Marschall]{Owen Marschall}
\email[Marschall]{omarschall15@amherst.edu}
\author[Onul]{Darius Onul}
\email[Onul]{donul15@amherst.edu}
\author[Xiao]{Yang Xiao}
\email[Xiao]{ynxysunny@gmail.com}

%%%%%%%%%%%%%%%%%%%%%%%%%%%%%%%%%%%%%%%%%%%%%%%%%%%%%%%%%%%%%%%%%

\begin{document}

\newcounter{bean}
\newcounter{sheep}

\begin{abstract}
We construct the first examples of rational functions
defined over a nonarchimedean field with a certain
dynamical property: the Julia set in the Berkovich projective line
is connected but not contained in a line segment.
We also show how to compute the measure-theoretic
and topological entropy of such maps.  In particular,
we give an example for which
the measure-theoretic entropy is strictly
smaller than the topological entropy,
thus answering a question of Favre and Rivera-Letelier.
\end{abstract}

\maketitle

Let $\Cv$ be an algebraically closed field equipped with
a non-archimedean absolute value $|\cdot|_v$.  That is,
$|\cdot|_v:\Cv\to [0,\infty)$ with $|x|_v=0$ if and only if $x=0$,
with $|xy|_v=|x|_v |y|_v$, and satisfying the non-archimedean
triangle inequality
$$|x+y|_v \leq \max\{|x|_v, |y|_v\}.$$
We will assume that $\Cv$ is complete with respect to $|\cdot|_v$;
that is, all $|\cdot|_v$-Cauchy sequences converge.
The \emph{ring of integers} $\ints$ of $\Cv$ is the closed unit
disk
$\ints:= \{x\in\Cv : |x|_v \leq 1\}$,
which forms a subring of $\Cv$ with unique
maximal ideal $\maxid:=\{x\in\Cv : |x|_v < 1\}$.
The \emph{residue field} $k$ of $\Cv$ is the quotient $\ints/\maxid$.

The \emph{degree} of a rational function $\phi\in\Cv(z)$ is
$\deg\phi := \max\{\deg f, \deg g\}$, where $\phi=f/g$ and
$f,g\in\Cv[z]$ are relatively prime polynomials.
For $n\geq 0$, we
denote the $n$-th iterate of $\phi$ under composition
by $\phi^n$; that is, $\phi^0(z)=z$,
and $\phi^{n+1}= \phi\circ \phi^n$.

The rational function $\phi$ acts naturally on the Berkovich
projective line $\PBerk$.
We will discuss $\PBerk$ in more detail
in Section~\ref{sec:berk}; for the moment, we note only that
$\PBerk$ is a certain path-connected compactification of $\PP^1(\Cv)$,
and that many of the extra points correspond to closed disks in $\Cv$.
We also note that 
the set $\HBer:=\PBerk\smallsetminus\PCv$ has a natural metric $d_{\HH}$,
although the metric topology on $\HBer$ is stronger than the topology
inherited from $\PBerk$.
We call $\HBer$ \emph{hyperbolic space}, and $d_{\HH}$
the \emph{hyperbolic metric}.

The dynamical action of $\phi$ partitions $\PBerk$ into two
invariant subsets: the (Berkovich) Julia set $\calJ_{\phi}$,
which is the closed subset on which $\phi^n$ acts chaotically,
and the (Berkovich) Fatou set $\calF_{\phi}$,
which is the (open) complement of $\calJ_{\phi}$.
There are numerous examples of Berkovich Julia sets in the literature;
see, for example, \cite{BR,DF1,DF2,FRL,Kiwi,Riv1,Riv2,Riv3}.
However, in all these examples, the Julia set is either
a single point, a line segment, or a disconnected set
(in which case it necessarily has infinitely many connected components).
In this paper, we give the first examples of Berkovich Julia sets
that are connected but are not contained in a line segment.

The main engine we use to produce our examples is the following theorem.
To state it, we note that
a \emph{finite tree} $\Gamma\subseteq\PBerk$ is
exactly what it sounds like:
a subset of $\PBerk$ homeomorphic to a finite tree.
Similarly, an \emph{interval} $I\subseteq\PBerk$ is
a subset of $\PBerk$ homeomorphic to an interval in $\RR$.
See Section~\ref{sec:berk} for details.

\begin{thmA}
Let $\phi\in\Cv(z)$ be a rational function of degree at least $2$
with Berkovich Julia set $\Jul_{\phi}$.
Let $\Gamma\subseteq\HBer$ be a finite tree,
let $I\subseteq \Gamma$ be a compact interval, and let $x_0\in I$.
Suppose that $I$ can be written as a union of
intervals $I=I_1\cup\cdots\cup I_m$ such that
\begin{enumerate}
\item $x_0\in\Jul_{\phi}$,
\item $\phi^{-1}(x_0)\subseteq \Gamma$,
\item $\bigcup_{n\geq 0} \phi^{-n}(I) \supseteq \Gamma$, and
\item for each $i=1,\ldots, m$, there are integers $b_i\geq 1$
and $c_i\geq 2$ such that
\begin{itemize}
\item $\phi^{b_i}$ maps $I_i$ onto $I$, with
\item $d_{\HH}(\phi^{b_i}(x),\phi^{b_i}(y))=c_i d_{\HH}(x,y)$
for all $x,y\in I_i$.
\end{itemize}
\end{enumerate}
Then $\Jul_{\phi}$ is connected and contains $\Gamma$.

Moreover, if $\Gamma$ is not contained in an interval, then
$\Jul_{\phi}$ has a dense subset of branch points,
i.e., points $x\in\Jul_{\phi}$ for which
$\Jul_{\phi}\smallsetminus\{x\}$ has at least three
connected components.
\end{thmA}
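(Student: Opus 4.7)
My plan is to prove $I\subseteq\Jul_\phi$ first, then $\Gamma\subseteq\Jul_\phi$, then connectedness of $\Jul_\phi$, and finally density of branch points.

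For the inclusion $I\subseteq\Jul_\phi$, hypothesis (d) endows $I$ with a piecewise-expanding Markov structure. The standard inductive construction produces, for each finite word $(i_1,\ldots,i_n)\in\{1,\ldots,m\}^n$, a sub-interval of $I_{i_1}$ on which $\phi^{b_{i_1}+\cdots+b_{i_n}}$ maps surjectively onto $I$ with hyperbolic expansion $c_{i_1}\cdots c_{i_n}\geq 2^n$; its $d_{\HH}$-diameter is thus at most $d_{\HH}(I)/2^n$, and it contains a preimage of $x_0$. Hence backward iterates of $x_0$ are dense in $I$. Since a Julia point of a degree-$\geq 2$ rational map is non-exceptional, $\bigcup_n\phi^{-n}(x_0)\subseteq\Jul_\phi$, and taking closures gives $I\subseteq\Jul_\phi$. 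The inclusion $\Gamma\subseteq\Jul_\phi$ is then immediate from (c) and total invariance: every $y\in\Gamma$ satisfies $\phi^N(y)\in I\subseteq\Jul_\phi$ for some $N$, so $y\in\phi^{-N}(\Jul_\phi)=\Jul_\phi$.

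For connectedness, I would use that, $x_0$ being non-exceptional, $\Jul_\phi=\overline{\bigcup_{n\geq 0}\phi^{-n}(x_0)}=\overline{T}$, where $T:=\bigcup_{n\geq 0}\phi^{-n}(\Gamma)$. Set $T_N:=\bigcup_{n=0}^{N}\phi^{-n}(\Gamma)=\Gamma\cup\phi^{-1}(T_{N-1})$ and induct on $N$. The base case $T_0=\Gamma$ is given. For the inductive step, I invoke the standard fact that each connected component of the preimage, under a non-constant rational map, of a connected closed subtree of $\PBerk$ surjects onto that subtree (a consequence of the openness and properness of $\phi\colon\PBerk\to\PBerk$ together with the tree structure). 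Since $T_{N-1}$ is a connected subtree by induction, every component of $\phi^{-1}(T_{N-1})$ contains some $y\in\phi^{-1}(x_0)$; but by (b), $y\in\Gamma\subseteq T_{N-1}$, so every component meets $\Gamma$, and $T_N$ is connected. Passing to the union and closure, $\Jul_\phi=\overline{T}$ is connected.

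For the branching statement, if $\Gamma$ is not contained in an interval then $\Gamma$ has a vertex $y$ of valence $\geq 3$. Three branches of $\Gamma$ at $y$ lie in three distinct tangent directions of $\PBerk$, and since $\PBerk$ is uniquely arc-connected, these branches lie in three distinct connected components of $\PBerk\setminus\{y\}$; as each branch lies in $\Gamma\subseteq\Jul_\phi$, the set $\Jul_\phi\setminus\{y\}$ has at least three components, and $y$ is a branch point of $\Jul_\phi$. For any $y'\in\phi^{-n}(y)$, the induced map $(\phi^n)_*$ on tangent directions at $y'$ is surjective (again by openness of $\phi^n$), so the three branches of $\Jul_\phi$ at $y$ pull back to three distinct branches at $y'$ lying in $\phi^{-n}(\Jul_\phi)=\Jul_\phi$, and $y'$ is also a branch point. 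Since $y$ is a non-exceptional Julia point, $\bigcup_n\phi^{-n}(y)$ is dense in $\Jul_\phi$, so branch points of $\Jul_\phi$ are dense. The main obstacle in the whole plan is the surjectivity of components of $\phi^{-1}$ of a connected subtree onto that subtree in the connectedness step, together with its tangent-space version in the branching step; both are standard structural consequences for rational maps on $\PBerk$ but must be formulated and invoked with care.
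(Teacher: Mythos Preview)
Your proposal is correct and follows essentially the same four-step route as the paper: density of $\bigcup_n\phi^{-n}(x_0)$ in $I$ via the expanding Markov structure, $\Gamma\subseteq\Jul_\phi$ by (c) and invariance, connectedness of $T_N=\bigcup_{n\le N}\phi^{-n}(\Gamma)$ by induction using that every component of $\phi^{-1}(T_{N-1})$ surjects onto $T_{N-1}$ and hence meets $\phi^{-1}(x_0)\subseteq\Gamma$, and finally pulling back the branch point of $\Gamma$ along the dense backward orbit. The only real difference is packaging: the paper isolates and proves your ``standard fact'' as a separate lemma (for finite trees $\Gamma\subseteq\HBer$, the set $\phi^{-1}(\Gamma)$ is a finite disjoint union of finite trees, each mapping onto $\Gamma$), and in Step~4 it phrases the pull-back of branching via ``mapping properties of residue classes'' (i.e., each residue class at $y'$ over a given direction at $y$ surjects onto the corresponding residue class at $y$, so Julia points pull back into it) rather than via surjectivity of the tangent map---but these are the same underlying facts.
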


Julia sets satisfying the full conditions of Theorem~A can be
considered dendrites, much like the complex Julia set of
$\phi(z)=z^2+i\in\CC(z)$.
In fact, however, there are maps with connected Julia set having a dense
set of points of infinite branching --- that is,
points $x\in\Jul_{\phi}$ for which
$\Jul_{\phi}\smallsetminus\{x\}$ has infinitely many
connected components.
In Theorem~\ref{thm:infbranch}, we will give a
simple sufficient condition for such
infinite branching to occur.
These dendritic Julia sets are rather more complicated
than those like the complex Julia set of $z^2+i$, for which
there is only finite branching at each branch point.

%For any point $x\in\PBerk$ and any $n\geq 0$, we write
%$\phi^{-n}(x)=\{y\in\PBerk : \phi^n(y)=x\}$
%for the preimage set of $x$ under $\phi^n$.

We will also be interested the entropy
of a map $\phi\in\Cv(z)$ acting on $\PBerk$.
In particular, in \cite{FRL}, Favre and Rivera-Letelier
discuss the measure-theoretic entropy
$h_{\mu}(\phi)$ and topological entropy $h_{\textup{top}}(\phi)$
of $\phi$ acting on $\PBerk$.
(Here, $\mu=\mu_{\phi}$ is a certain natural measure
on $\PBerk$ supported on the Julia set $\Jul_{\phi}$;
see Sections~\ref{sec:Julia} and~\ref{sec:invarmeas} for details.)
They prove that
$$0\leq h_{\mu}(\phi) \leq h_{\textup{top}}(\phi)\leq \log \deg(\phi),$$
and they give examples, some with Julia set a line segment,
where the inequalities are all strict.
%(In fact, the upper bound $\log\deg(\phi)$ is really
%$\log\deg_{\textup{sep}}(\phi)$, where
%$\deg_{\textup{sep}}(\phi)$ is the separable degree of $\phi$;
%but all of the maps we will consider are separable and hence
%have $\deg_{\textup{sep}}(\phi)=\deg\phi$.)
Since their example maps have fairly large degree,
in \cite[Question 3]{FRL}, they
ask, among other things, whether there exist
rational functions $\phi\in\Cv(z)$
of degree at most $9$ where the inequalities are strict
and the Julia set is connected.

We answer this question positively
when the residue characteristic $p$ of $\Cv$ is $3$.
(In a separate paper, \cite{BenQ}, the second
author will present an example of degree~4,
in residue characteristic $p=2$, for which
the inequalities are strict and the Julia set is connected.)
In particular, for $p=3$
and $a\in\Cv^{\times}$ satisfying $|3|_v\leq |a|_v<1$,
we will show that
$$\phi(z) = \frac{a z^6 + 1}{a z^6 + z^3 - z}
= 1 + \frac{-z^3 + z + 1}{a z^6 + z^3 - z}$$
has connected Julia set and entropies
$$h_{\mu}(\phi) = \log 2 + \frac{5}{11} \log 3
\approx 1.1925\ldots,
\quad\text{and}\quad
h_{\textup{top}}(\phi) = \log \lambda \approx 1.3496\ldots,$$
where $\lambda\approx 3.8558\ldots$
is the largest real root of the polynomial $t^3 -4t^2 - t + 6$.
To our knowledge, $\phi$ is the first rational function
to appear in the literature which acts on $\PBerk$ with topological
entropy \emph{not} the logarithm of an integer.

To compute these entropies, we will use the following result.
%Let $\NN=\{0,1,2,3,\ldots\}$ denote the (nonnegative) natural numbers.

%\begin{thmB}
%Let $\phi\in\Cv(z)$ be a rational function of degree $d\geq 2$.
%Let $\calP = \{U_i\}_{i\in\NN}$ be a partition of $\Jul_{\phi}$ into
%countably many Borel subsets.  Suppose that
%\begin{enumerate}
%\item for any two distinct points $x, y\in\Jul_{\phi}$,
%there is some $n\geq 0$ such that $\phi^n(x)\in U_i$ and $\phi^n(y)\in U_j$
%with $i\neq j$, and
%\item
%for every $i\in\NN$, there is a set $S_i\subseteq\NN$ of positive integers
%such that
%$$\phi \text{ maps } U_i
%\text{ bijectively onto } \bigcup_{j\in S_i} U_j.$$
%\end{enumerate}
%Then $\calP$ is a one-sided generator for $\phi:\Jul_{\phi}\to\Jul_{\phi}$.
%That is, the $\sigma$-algebra generated by
%$$\big\{ \phi^{-n}(U_i) : i\in\NN \text{ and } n\geq 0 \big\}$$
%is the full Borel $\sigma$-algebra on $\Jul_{\phi}$
%\end{thmB}

%I'M CHANGING THE THEOREM BELOW TO ALLOW
%INFINITELY MANY SINGLETONS.
%
%CHANGE IT BACK IF THAT'S A PROBLEM.

\begin{thmB}
Let $\phi\in\Cv(z)$ be a rational function of degree $d\geq 2$,
with Julia set $\Jul_{\phi}$ and invariant measure $\mu$.
Suppose that $\Jul_{\phi}$ is connected
and of finite diameter with respect to
the hyperbolic metric $d_{\HH}$.
Let $\calP$ be a countable partition of $\Jul_{\phi}$
such that for every $U\in\calP$,
%and that there is some $U\in\calP$ and $n\geq 0$ such that
%$\phi^n(U)=\Jul_{\phi}$.
%Suppose also that for every $U\in\calP$,
\begin{itemize}
\item $U$ is path-connected and is
the union of an open set and a countable set;
\item
there is a set $S_U\subseteq\calP$ such that
%$$\phi \text{ maps } U_i \text{ bijectively onto } \bigcup_{j\in S_i} U_j.$$
$\phi$ maps $U$ bijectively onto $\bigcup_{V\in S_U} V$;
and
\item
for all $x\neq y\in U$,
there is an integer $n\geq 0$ such that
either $\phi^n(x)$ and $\phi^n(y)$ belong to different elements
of $\calP$, or else
%$$d_{\HH}\big(\phi^n(x),\phi^n(y)\big) \geq  2 d_{\HH}(x,y).$$
$\dsps d_{\HH}\big(\phi^n(x),\phi^n(y)\big) \geq  2 d_{\HH}(x,y)$.
\end{itemize}
Then
\begin{enumerate}
\item
$\calP$ is a one-sided generator
for $\phi:\Jul_{\phi}\to\Jul_{\phi}$
of finite entropy.
\item
$\dsps h_{\mu}(\phi) =
\int_{\PBerk} \log\Big( \frac{d}{\deg_x(\phi)} \Big) \, d\mu(x)$.
\item
Let $\calA\subseteq\calP$ consist of those elements of $\calP$
that are uncountable sets, and
%that are open in $\Jul_{\phi}$, and
let $T$ be the one-sided topological Markov shift on the symbol space $\calA$,
where symbol $U\in\calA$ can be followed by symbol $V\in\calA$
if and only if $V\in S_U$.
Then $h_{\textup{top}}(\phi) = h_{\textup{Gur}}(T)$,
where $h_{\textup{Gur}}(T)$ is the Gurevich entropy of $T$.
\end{enumerate}
\end{thmB}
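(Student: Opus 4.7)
The plan is to handle the three parts in sequence, with (a) serving as the foundation for (b) and (c).

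For (a), the one-sided generator property follows from an iterated doubling argument. Suppose distinct $x, y \in \Jul_\phi$ share the same $\calP$-itinerary under $\phi$. Since $x$ and $y$ lie in a common atom $U \in \calP$, the third bullet of the hypothesis provides $n_1 \geq 0$ with either $\phi^{n_1}(x),\phi^{n_1}(y)$ in distinct atoms (contrary to the shared itinerary) or $d_\HH(\phi^{n_1}(x),\phi^{n_1}(y)) \geq 2\, d_\HH(x,y)$. Applying the dichotomy inductively to the iterates produces a sequence $n_1 < n_1 + n_2 < \cdots$ with
$$d_\HH\bigl(\phi^{n_1+\cdots+n_k}(x),\,\phi^{n_1+\cdots+n_k}(y)\bigr) \;\geq\; 2^k\, d_\HH(x,y).$$
Finite hyperbolic diameter of $\Jul_\phi$ then forces $x = y$, so $\calP$ separates orbits and is a one-sided generator. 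The finite-entropy assertion $h_\mu(\phi,\calP) < \infty$ will follow from (b) via the uniform bound $\log(d/\deg_x(\phi)) \leq \log d$.

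For (b), I apply the Rokhlin--Parry entropy formula for a one-sided generator on whose atoms $\phi$ is injective: since $\mu$ is $\phi$-invariant and part (a) supplies such a generator,
$$h_\mu(\phi) \;=\; h_\mu(\phi,\calP) \;=\; \int_{\Jul_\phi} \log J_\mu(\phi)(x)\, d\mu(x),$$
where $J_\mu(\phi)$ is the measure-theoretic Jacobian. The canonical invariant measure $\mu$ on $\PBerk$ is balanced, meaning $\phi^*\mu = d\cdot\mu$; equivalently, for any Borel set $E$ on which $\phi$ is injective,
$$\mu(\phi(E)) \;=\; \int_E \frac{d}{\deg_x(\phi)}\, d\mu(x).$$
This identifies $J_\mu(\phi)(x) = d/\deg_x(\phi)$ and yields the claimed formula.

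For (c), define the itinerary map $\pi : \Jul_\phi \to \calP^\NN$ by $\pi(x)_n = U$ iff $\phi^n(x) \in U$. The bijection $\phi : U \to \bigcup_{V \in S_U} V$ constrains $\pi(\Jul_\phi)$ to the subshift of sequences respecting the transition rule $U \to V$ iff $V \in S_U$, and part (a) shows $\pi$ is injective. Setting $X_\calA := \pi^{-1}(\calA^\NN)$, the map $\pi$ conjugates $\phi|_{X_\calA}$ to the Markov shift $(T,\Sigma_\calA)$ on the uncountable-atom alphabet $\calA$. Because each $U$ is open modulo a countable set, $\pi$ is continuous on the open part of each atom; combined with Gurevich's classical identification of the topological entropy of a countable topological Markov shift with its Gurevich entropy, this gives $h_\textup{top}(\phi|_{X_\calA}) = h_\textup{Gur}(T)$.

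The main obstacle will be checking that $\Jul_\phi \setminus X_\calA$ --- orbits that visit some countable atom --- contributes no additional topological entropy, since a countable invariant set need not have entropy zero if its accumulation structure is sufficiently rich. The plan is to exploit the Markov structure: preimages of a countable atom under $\phi^{-n}$ are controlled by the finite local degree of $\phi$, and every transition through a countable atom is recorded in the same transition graph as the uncountable atoms, so orbit segments through these exceptional points are counted within the transitions of $T$ and introduce no new exponential growth beyond $h_\textup{Gur}(T)$.
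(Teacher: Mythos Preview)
Your approach to part~(a) via the doubling argument is essentially the paper's idea, and your plan for~(b) via Rokhlin's formula matches the paper. However, there are two genuine gaps.

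\textbf{Finite entropy of $\calP$.} You propose to deduce finiteness of the partition entropy from~(b) via the bound $\log(d/\deg_x\phi)\le\log d$. This is circular: Rokhlin's formula (as in \cite[Theorem~2.9.7]{PU}, which the paper invokes) requires as input a one-sided generator \emph{of finite entropy}, meaning $H_\mu(\calP)=-\sum_{U\in\calP}\mu(U)\log\mu(U)<\infty$, not merely $h_\mu(\phi,\calP)<\infty$. The latter is always bounded by $\log d$, but $H_\mu(\calP)$ need not be; a countable partition can have infinite static entropy even when the dynamical entropy is finite. The paper proves $H_\mu(\calP)<\infty$ separately (Lemma~\ref{lem:finent}), by a nontrivial argument: it finds an open $U_0\in\calP$ with $\phi^N(U_0)=\Jul_\phi$, shows the tail sets $A_n=\{x:\phi^i(x)\notin U_0,\ i<n\}$ satisfy $\mu(A_{n+N})\le(1-d^{-N})\mu(A_n)$ via the Jacobian, and then bounds $H_\mu(\calP)$ by $-\log\mu(U_0)+\log d\cdot\sum_n\mu(A_n)<\infty$. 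You need an independent argument of this kind.

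\textbf{Surjectivity in part~(c).} You claim $\pi$ conjugates $\phi|_{X_\calA}$ to the Markov shift, but you have only shown $\pi$ is injective. Nothing so far rules out admissible symbol sequences that are not realized by any point of $\Jul_\phi$; if there were many such sequences, the Gurevich entropy of $T$ could exceed $h_{\textup{top}}(\phi)$. The paper handles this in its Step~5 by proving that the set $W$ of unrealized sequences is \emph{countable}. The argument uses the path-connectedness of the cylinder sets $U_0\cap\phi^{-1}(U_1)\cap\cdots\cap\phi^{-n}(U_n)$ (established in the paper's Step~1, and which you have not proved) together with the fact that $\Jul_\phi$ meets only countably many residue classes at any point. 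Your last paragraph addresses only the reverse discrepancy---points of $\Jul_\phi$ whose orbits hit countable atoms---but the harder direction is controlling the extra symbolic sequences. Also note that the paper's comparison is not via a direct topological conjugacy (the shift space is not compact) but via the variational principle on $\Jul_\phi$ combined with Gurevich's theorem that $h_{\textup{Gur}}(T)=\sup_\nu h_\nu(T)$, after matching the invariant measures on the two sides modulo the countable sets $Z$ and $W$.
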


The existence of a one-sided generator of finite entropy
is a technical condition which enables
the explicit computation of the entropy; see
the proof of Theorem~B in Section~\ref{sec:markov}.
Meanwhile, 
the Gurevich entropy $h_{\textup{Gur}}(T)$
mentioned at the end of Theorem~B is the analog
of the topological entropy for
a countable-state Markov shift $T$;
see Section~\ref{sec:gurevich}.

The outline of the paper is as follows.
In Section~\ref{sec:berk} we discuss the basic properties of the
Berkovich projective line $\PBerk$.
In Section~\ref{sec:dynamics}, we give some relevant background
on the theory of dynamics on $\PBerk$;
we also include some discussion of entropy, especially
Gurevich entropy.
We prove Theorem~A in Section~\ref{sec:proofa},
and Theorem~B in Section~\ref{sec:markov}.
Finally, in Section~\ref{sec:examples}, we present
our degree six example.

\section{Background on the Berkovich projective line}
\label{sec:berk}

\subsection{Berkovich points and disks}
The Berkovich projective line $\PBerk$ over $\Cv$ is a certain
compact Hausdorff topological space
containing $\PCv$ as a subspace.  The precise definition,
involving multiplicative seminorms on $\Cv$-algebras,
is tangential to this paper; the interested reader may consult
Berkovich's monograph \cite{Ber}, the detailed
exposition in \cite{BR}, or the summary in \cite{BenAZ},
for example.  We give only an overview here, without proofs.

There are four types of points in $\PBerk$.  Type~I points
are simply the points of $\PCv$.  Points of type~II and~III
correspond to closed disks in $\Cv$.  Specifically, for
$a\in\Cv$ and $r>0$, the closed disk
$\Dbar(a,r) := \{z\in \Cv : |z-a|_v\leq r\}$
corresponds to a unique point, which we shall denote
$\zeta(a,r)$, in $\PBerk$.
If $r\in |\Cv^{\times}|_v$, the point $\zeta(a,r)$ is
of type~II; otherwise, $\zeta(a,r)$ is of type~III.
The type~II point $\zeta(0,1)$ is known as the
\emph{Gauss point}.
The remaining points of $\PBerk$, knowns as the type~IV
points, correspond to equivalence classes of
descending chains $D_1\supsetneq D_2 \supsetneq \cdots$
of disks in $\Cv$ with empty intersection.
Type~IV points will not be important in this paper,
though.

The absolute value $|\cdot|_v$ extends to a function
$|\cdot|_v : \PBerk\smallsetminus\{\infty\} \to [0,\infty)$,
where in particular $|\zeta(a,r)|_v:=\max\{|a|_v,r\}$.

The space $\PBerk$ is equipped with a topology,
known as either the \emph{Gel'fand topology} or the
\emph{weak topology}.  The Gel'fand topology restricted to
$\PCv$ coincides with the usual topology induced by $|\cdot|_v$.
Unlike $\PCv$, however, $\PBerk$ is compact (and still Hausdorff).

Given $a\in\Cv$ and $r>0$,
$\DbarBerk(a,r)\subseteq\PBerk$
will denote
a certain closed subset whose
type~I points are those in $\Dbar(a,r)$,
and whose type~II and~III points are of the form
$\zeta(b,s)$ with $\Dbar(b,s)\subseteq\Dbar(a,r)$.
Similarly, $\DBerk(a,r)\subseteq\PBerk$
is a certain open subset whose type~I points are
those in $D(a,r)\subseteq\Cv$,
and whose type~II and~III points are of the form
$\zeta(b,s)$ with $\Dbar(b,s)\subseteq D(a,r)$
\emph{other than} $\zeta(a,r)$ itself.

More generally,
a \emph{closed Berkovich disk} is a subset of $\PBerk$
of the form either $\DBerk(a,r)$ or
$\PBerk\smallsetminus\DbarBerk(a,r)$.
Similarly, an
\emph{open Berkovich disk} is a subset of $\PBerk$
of the form either $\DbarBerk(a,r)$ or
$\PBerk\smallsetminus\DBerk(a,r)$.
For each of these four possibilities, the boundary
of the Berkovich disk consists of the single point $\zeta(a,r)$.
Meanwhile, a \emph{closed} (respectively, \emph{open})
\emph{connected affinoid} $U$ is a nonempty finite intersection
of closed (respectively, open) Berkovich disks.
If none of the disks in the intersection contains any other,
then the boundary of $U$ consists of the
(finitely many) boundary points of the disks in the intersection.
%The open connected affinoids form
%a basis for the Gel'fand topology.

\subsection{Path-connectedness and the hyperbolic metric}
The space $\PBerk$ is uniquely path-connected.
That is, for any
$x\neq y\in\PBerk$, there is a unique subspace
$I\subseteq\PBerk$ homeomorphic to the unit interval
$[0,1]\subseteq\RR$,
where $x,y\in I$, and the homeomorphism takes
$x$ to $0$ and $y$ to $1$.
The set $I$ is called an \emph{interval} in $\PBerk$,
and it is denoted $[x,y]$ or $[y,x]$.
%If $x=y$, we write $[x,x]:=\{x\}$.
The points in $[x,y]$ are said to \emph{lie between}
$x$ and $y$.
For example, if $x=\zeta(a,r)$ and $y=\zeta(a,s)$ with $s>r>0$,
then the interval $[x,y]$ consists of all points
$\zeta(a,t)$ with $r\leq t\leq s$.
$\PBerk$ is also locally path-connected.  Thus, any connected
open subset is, like $\PBerk$ itself, uniquely path-connected.

Given a set of points $S\subseteq\PBerk$,
the \emph{convex hull} of $S$ is the set
$\Gamma:=\bigcup_{x,y\in S} [x,y]$
consisting of all points lying between two points of $S$.
If $S$ is finite and nonempty, then the convex hull $\Gamma$ is a
finite tree.  Conversely, any finite tree $\Gamma\subseteq\PBerk$
is the convex hull of some nonempty finite set $S\subseteq\PBerk$.

The unique path-connectedness endows $\PBerk$
with a tree-like structure; however, unlike a finite tree,
it has points of infinite branching, and such points
are dense in any interval $[x,y]$ with $x\neq y$.
In fact, these infinitely-branched points are precisely the
type~II points, because for any type~II point $x=\zeta(a,r)$,
the complement
$\PBerk\smallsetminus\{x\}$
consists of infinitely many connected components,
called the \emph{residue classes} at $x$.
Each of these components is an open Berkovich disk:
one is $\PBerk\smallsetminus\DbarBerk(a,r)$, while the rest are
the infinitely many disks
$\DBerk(b,r)\subseteq\PKbar$ with $b\in\Dbar(a,r)$.
Thus, the residue classes at $x$
are in one-to-one correspondence with the points
of $\Pk$, where $k$ is the residue field of $\Cv$.

On the other hand, if $\zeta(a,r)\in\PBerk$ is of type~III,
then $\PBerk\smallsetminus\{\zeta(a,r)\}$ has two components:
the open Berkovich disks $\DBerk(a,r)$
and $\PBerk\smallsetminus\DbarBerk(a,r)$.
In addition, if $x\in\PBerk$ is either of type~I or type~IV,
then $\PBerk\smallsetminus\{x\}$ is still connected.

The set $\HBer:=\PBerk\smallsetminus\PCv$ is called
\emph{hyperbolic space} and admits a metric $d_{\HH}$, which
can be easily described on the type~II and~III points.
Given $a\in\Cv$ and $r\geq s>0$, we have
$$d_{\HH}\big(\zeta(a,r) , \zeta(a,s)\big) := \log r - \log s.$$
The hyperbolic distance between two points $\zeta(a,r)$
and $\zeta(b,s)$ is then the sum of the distances from each
point to $\zeta(c,t)$, where $\Dbar(c,t)$ is the smallest closed
disk in $\Cv$ containing both $\Dbar(a,r)$ and $\Dbar(b,s)$.
The reader should be warned, however, that the $d_{\HH}$-metric
topology on $\HBer$ is strictly stronger than Gel'fand topology,
and hence the former is known as the \emph{strong topology}.
However, if $\Gamma\subseteq\HBer$ is a finite tree, then the Gel'fand
and strong topologies each induce the same subspace topology on $\Gamma$.

\subsection{Rational functions on $\PBerk$}
\label{sec:ratact}
Any nonconstant rational function $\phi\in\Cv(z)$
extends uniquely to a continuous function
from $\PBerk$ to $\PBerk$.  This extension, which we
also denote $\phi$, is in fact an open map.
For each point $x\in\PBerk$, the image $\phi(x)$ is a
point of the same type.  In addition, $\phi$ has a local
degree $\deg_x(\phi)$ at $x$.
The local degree is an integer between $1$ and $\deg\phi$;
in fact, for any $y\in\PBerk$, we have
\begin{equation}
\label{eq:degsum}
\sum_{x\in\phi^{-1}(y)} \deg_x(\phi) = \deg\phi.
\end{equation}
That is, every point of $\PBerk$ has exactly $\deg\phi$ preimages,
counted with multiplicity.
Rather than formally defining $\phi$ and $\deg_x(\phi)$ on $\PBerk$,
we note the following special cases.
\begin{enumerate}
\item
If $D(a,r)\subseteq\Cv$ is an open disk
containing no poles of $\phi$, then the image $\phi(D(a,r))$
is also an open disk $D(b,s)$, and we have
$\phi(\zeta(a,r))=\zeta(b,s)$.
If there are no poles in the closed disk $\Dbar(a,r)$,
then $\phi:\Dbar(a,r)\to\Dbar(b,s)$ is everywhere $m$-to-$1$,
counting multiplicity, for some $m\geq 1$;
in that case, $\deg_{\zeta(a,r)}(\phi)=m$.
\item
Writing $\phi=f/g$ with $f,g\in\ints[z]$ and at least one coefficient
of $f$ or $g$ in $\ints^{\times}$, define
$$\overline{\phi}:= \bar{f} / \bar{g} \in k(z)\cup\{\infty\}.$$
We have $\phi(\zeta(0,1))=\zeta(0,1)$ if and only if $\bar{\phi}$
is not constant.
In that case, $\deg_{\zeta(0,1)}(\phi) = \deg(\bar{\phi})$.
%\item
%If $\zeta(a,r)$ is a type~II point, there are degree one maps
%$\eta_1,\eta_2 \in\PGL(2,\Cv)$ such that $\eta_1(\zeta(0,1))=\zeta(a,r)$
%and $\overline{\eta_2^{-1} \circ \phi \circ \eta_1}$ is not constant.
%Then $\phi(\zeta(a,r))=\eta_2(\zeta(0,1))$, and
%$\deg_{\zeta(a,r)}(\phi) =
%\deg(\overline{\eta_2^{-1} \circ \phi \circ \eta_1})$.
\item Given $\psi\in\Cv(z)$ and $w\in\Cv$, we will write
\begin{equation}
\label{eq:approxdef}
\phi(w)\approx \psi(w)
\quad\text{if}\quad |\phi(w)-\psi(w)|_v< |\phi(w)|_v.
\end{equation}
Fix $a\in\Cv$ and $r>0$. Suppose there is some $\psi\in\Cv(z)$
and $b\in\Cv$ such that
\begin{itemize}
\item $\phi(w)-b\approx \psi(w)$ for all $w\in\Cv$ with $|w-a|_v<r$, and
\item $\psi(\zeta(a,r))=\zeta(0,s)$ for some $s>0$.
\end{itemize}
Then
$\phi(\zeta(a,r)) = \zeta(b,s)$,
and $\deg_{\zeta(a,r)}(\phi) = \deg_{\zeta(a,r)}(\psi)$.
\end{enumerate}

%Meanwhile, a degree one map $\eta\in\PGL(2,\Cv)$ is an isometry
%with respect to the hyperbolic metric $d_{\HH}$.  More generally,
If $U\subseteq\PBerk$ is connected and $\deg_x(\phi)=n$
is constant on $U$, then
%$d_{\HH}(\phi(x),\phi(y)) = n d_{\HH}(x,y)$ for all $x,y\in U$.
$$d_{\HH}(\phi(x),\phi(y)) = n d_{\HH}(x,y) \quad\text{for all } x,y\in U.$$

%If $x$ is a type~II point, then the residue classes at $x$
%(i.e., connected components of $\PBerk\smallsetminus\{x\}$)
%exhibit one of two behaviors under the action of $\phi$.
%Finitely many (in fact, at most $(\deg\phi)-1$) of the residue
%classes $U$ have $\phi(U)=\PBerk$; we will call these
%the \emph{bad residue classes} at $x$.
%The remaining residue classes are called the \emph{good residue classes}
%at $x$; each of them maps 
%onto a single residue class of $\phi(x)$.
%In fact, writing $\psi=\eta_2^{-1}\circ\phi\circ\eta_1$ as in item~(d)
%above, where $\eta_1,\eta_2\in\PGL(2,\Cv)$ with
%$x=\eta_1(\zeta(0,1))$ and $\phi(x)=\eta_2(\zeta(0,1))$,
%the good residue classes are mapped to their images
%exactly as the reduction $\overline{\psi}\in k(z)$
%maps the corresponding points of $\Pk$ to one another.
%
%Note that the map $\overline{\psi}$ of the previous paragraph
%makes sense even
%at the points of $\Pk$ corresponding to bad residue classes.
%Thus, we have a natural mapping $\overline{\phi}_x$
%from the set of \emph{all} residue
%classes at $x$, even the bad ones, to the set of residue classes at $\phi(x)$.
%The conclusion of previous paragraph can be rephrased as follows, then.
%First, if $U$ is a good residue class at $x$, then
%$\phi(U)=\overline{\phi}_x(U)$ is a residue class at $\phi(x)$.
%Second, if $U$ is a bad residue class at $x$, then
%$\phi(U)=\PBerk$; but $\overline{\phi}_x(U)$ is
%still a residue class at $\phi(U)$.

\section{Dynamics on the Berkovich projective line}
\label{sec:dynamics}

\subsection{Berkovich Julia sets}
\label{sec:Julia}

Just as in complex dynamics, a rational function $\phi\in\Cv(z)$
has an associated Julia set.  As in Section~\ref{sec:berk},
we will only give definitions and state properties without proofs.
For further details,
see, for example,
\cite[Section~6.4]{BenAZ},
\cite[Section~10.5]{BR}, or \cite[Section~2.3]{FRL}.

\begin{defin}
\label{def:Julia}
Let $\phi\in\Cv(z)$ be a rational function of degree $d\geq 2$.
The (\emph{Berkovich}) \emph{Julia set} of $\phi$ is the set
$\Jul_{\phi}\subseteq \PBerk$ consisting of those points $x\in\PBerk$
with the property that for any open set $U\subseteq\PBerk$
containing $x$,
$$\PBerk \smallsetminus \Big[ \bigcup_{n\geq 0} \phi^{-n}(U) \Big]
\quad \text{is finite}.$$
\end{defin}

The Berkovich Julia set of $\phi\in\Cv(a)$ has a number
of properties familiar to complex dynamicists.
First, it is closed in $\PBerk$ and hence compact.
Second, it is invariant under $\phi$, in the sense that
$\phi^{-1}(\Jul_{\phi}) = \phi(\Jul_{\phi})=\Jul_{\phi}$.
Third, for any $x\in\Jul_{\phi}$, the backward orbit
$$\calO^{-}_{\phi}(x) := \bigcup_{n\geq 0} \phi^{-n}(x)$$
is a dense subset of $\Jul_{\phi}$.
Fourth, if $x=\zeta(a,r)$ is a repelling periodic point
--- i.e., if $\phi^n(x)=x$ and $\deg_x(\phi^n)\geq 2$
for some $n\geq 1$ --- then $x\in\Jul_{\phi}$.
%(There are also repelling type~I points, for which
%$x\in\PCv$ with $\phi^n(x) = x$ and $|(\phi^n)'(x)|_v>1$.
%They also lie in $\Jul_{\phi}$, but they are not relevant
%in this paper.)

Meanwhile, it is possible that the $\Jul_{\phi}$
consists of a single point $x$,
which is necessarily a fixed point of type~II
satisying $\deg_x(\phi)=\deg\phi$.  In that case, $\phi$
is said to have \emph{potentially good reduction}.  However,
we will be mainly concerned with maps $\phi$
that do \emph{not} have potentially good reduction,
i.e., for which $\Jul_{\phi}$ consists of more than one point.
In that case,
$\Jul_{\phi}$ is a perfect set and in particular is
uncountable.

We note two other useful properties of the Julia set.
First,
for any open set $U$ intersecting $\Jul_{\phi}$,
some iterate $\phi^n(U)$ contains $\Jul_{\phi}$,
by \cite[Theorem~10.56(B')]{BR}.
Second, $\Jul_{\phi}$ is a
separable metrizable space and hence is second countable;
in fact, it has a basis consisting of countably many open
connected affinoids intersected with $\Jul_{\phi}$.
(This is true by \cite[Lemma~7.15]{FJ}, because, as
we will see in Section~\ref{sec:invarmeas},
$\Jul_{\phi}$ is the support of a Borel measure.)
It follows that $\Jul_{\phi}$
can intersect only countably many residue classes
of any point $x\in\PBerk$.

\subsection{The invariant measure}
\label{sec:invarmeas}

Again in parallel with complex dynamics,
given a rational function $\phi\in\Cv(z)$ of degree $d\geq 2$,
there is a naturally associated Borel probability measure $\mu=\mu_{\phi}$
on $\PBerk$.
(The construction of this measure, via potential theory
on $\PBerk$, first appeared in \cite{BRold,FRLold};
see also \cite[Section~10.1]{BR} and \cite{FRL}.)
The measure $\mu$ is invariant with respect to $\phi$,
meaning that $\mu(U)=\mu(\phi^{-1}(U))$ for any Borel set
$U\subseteq\PBerk$.
According to \cite[Th\'{e}or\`{e}me~A]{FRL}, 
it is also mixing and hence ergodic, so that
any Borel set $U$ with $\phi^{-1}(U)=U$ has either
$\mu(U)=0$ or $\mu(U)=1$.
By the same theorem, its support $\Supp\mu$ is precisely
$\Jul_{\phi}$; see also \cite[Sections~10.1, 10.5]{BR}.

In the context of ergodic theory, the \emph{Jacobian}
of $\phi:\PBerk\to\PBerk$ is a nonnegative-valued function
such that for every Borel set $U\subseteq\PBerk$
on which $\phi$ is injective,
we have $\mu(\phi(U)) = \int_U \Jac_{\phi}(x) \, d\mu(x)$.
In \cite[Lemme~4.4(2)]{FRL}, Favre and Rivera-Letelier
proved that any rational function $\phi\in\Cv(z)$
of degree $d\geq 2$ has a Jacobian, given by
\begin{equation}
\label{eq:jacfmla}
\Jac_{\phi}(x) = \frac{d}{\deg_x(\phi)}.
\end{equation}

\subsection{Entropy}
\label{sec:entropy}

Let $X$ be a topological space and $f:X\to X$ a continuous function.
If $\mu$ is a Borel probability measure on $X$ and $\calP$ is
a finite partition of $X$ into measurable pieces,
the entropy of $\calP$ is
$H(\calP) := -\sum_{U\in\calP} \mu(U)\log(\mu(U))$.
If $\mu$ is $f$-invariant, the
(\emph{measure-theoretic}) \emph{entropy} of $f$ is
$$h_{\mu}(f) = h_{\mu}(X,f)
:= \sup_{\calP} \lim_{n\to\infty} \frac{1}{n}
H(\calP \vee f^{-1} \calP \vee \cdots \vee f^{-n}\calP),$$
where the supremum is over all finite measurable partitions $\calP$
of $X$, the partition
$\calP\vee\calP'$ consists of all nonempty intersections
$U\cap U'$ with $U\in\calP$ and $U'\in\calP'$,
and $f^{-i}\calP := \{f^{-i}(U) : U\in\calP\}$.

If $X$ is compact, the \emph{topological entropy} of $f$ is
$$h_{\textup{top}}(f) = h_{\textup{top}}(X,f) =
\sup_{\calU} \lim_{n\to\infty} \frac{1}{n}
\log N(\calU \vee f^{-1} \calU \vee \cdots \vee f^{-n}\calU),$$
where the supremum is over all finite open covers of $X$, and
$N(\calU)$ is the minimum number of elements of $\calU$ needed
to cover $X$.  
If $X$ is compact and metrizable,
the \emph{variational principle} states that
$h_{\textup{top}}(f) = \sup\{h_{\mu}(f):\mu\in M(X,f)\}$,
where $M(X,f)$ is
the set of all $f$-invariant Borel probability measures on $X$.
(See, for example, \cite[Theorem~8.6]{Wal}.)

Let $\phi\in\Cv(z)$ be a rational function of degree $d\geq 2$,
with Julia set $\Jul_{\phi}\subseteq\PBerk$,
invariant measure $\mu=\mu_{\phi}$ supported on $\Jul_{\phi}$,
and Jacobian function $\Jac_{\phi}(x) = d/\deg_x(\phi)$
as in Section~\ref{sec:invarmeas}.
In \cite[Th\'{e}or\`{e}mes~C,D]{FRL}, Favre and Rivera-Letelier proved
that
$$0\leq \int_{\PBerk} \log \Jac_{\phi}(x) \, d\mu(x)
\leq h_{\mu}(\Jul_{\phi},\phi)
\leq h_{\textup{top}}(\Jul_{\phi},\phi)
= h_{\textup{top}}(\PBerk,\phi)
\leq \log d.$$
(Actually, if $\charact\Cv=p>0$,
then the final $d$ above can
be replaced by the sharper $\deg_{\textup{sep}}(\phi)$,
the smallest degree of $\psi\in\Cv(z)$ such that we can write
$\phi(z)=\psi(z^{p^r})$.  Since all of our maps will be separable,
however, we will always have $\deg_{\textup{sep}}(\phi)=d$.)

\subsection{Gurevich Entropy}
\label{sec:gurevich}

Let $\calA$ be a countable set, and let $C=\{c(a,b)\}_{a,b\in\calA}$
be an $\calA\times\calA$ matrix of $1$'s and $0$'s.
Equip $\calA$ with the discrete topology, let $X=\calA^{\NN}$
equipped with the product topology, and let
$$Y :=\big\{ \{a_n\}_{n\geq 0}\in X : c(a_n,a_{n+1})=1
\text{ for all } n\geq 0\big\}$$
be the set of sequences consistent with $C$.
Define the (\emph{one-sided}) \emph{topological Markov shift}
$T:Y\to Y$ by $T(\{a_n\}_{n\geq 0}) = \{a_{n+1}\}_{n\geq 0}$.
If $\calA$ is finite, then $Y$ is compact, and the variational
principle holds for $h_{\textup{top}}(Y,T)$.
However, if $\calA$ is countable, then $Y$ is
usually not compact, making it unclear how to even define
the topological entropy.  Gurevich \cite{Gur69} addressed this issue
by defining the \emph{Gurevich entropy} to be
$$h_{\textup{Gur}}(T)=h_{\textup{Gur}}(Y,T)
=\sup h_{\textup{top}}(Y',T'),$$
where the supremum is over all subsets $Y'\subseteq Y$
(with associated shift $T':=T|_{Y'}$) formed by restricting
to a finite subset $\calA'\subseteq\calA$ of symbols.

The Gurevich entropy can often be computed combinatorially.
For $a,b\in\calA$, a \emph{path} of length $n$ from $a$ to $b$ is
a finite sequence $a=a_0,a_1,\ldots,a_n=b$ with $c(a_{i-1},a_i)=1$
for all $i=1,\ldots,n$.
For any $n\geq 0$, let
$p_{a,b}(n)$ denote the number of paths of length $n$ from $a$ to $b$.
Let $R$ be the radius of convergence (convergence in $\CC$,
that is) of the associated generating function
$P_{a,b}(z) := \sum_{n\geq 0} p_{a,b}(n) z^n$.
If the underlying directed graph is strongly connected --- i.e., if for
any $a,b\in\calA$, there is a path from $a$ to $b$ --- then
Vere-Jones proved \cite{VJ} that $R$ is independent of $a$ and $b$,
and Gurevich proved \cite{Gur70} that $h_{\textup{Gur}}(T)=-\log R$.

For any $a\in\calA$, a \emph{first-return loop} at $a$ is a
path from $a$ to $a$ for which none of the intermediate symbols
in the path is $a$. For each $n\geq 1$, let $f_a(n)$ be the
number of first-return loops at $a$ of length $n$.
It is easy to show (see, for example, \cite[Lemma~7.1.6(iv)]{Kit}
or \cite[Equation~(1)]{Rue}) that the associated generating
function $F_a(z) := \sum_{n\geq 1} f_a(n) z^n$ satisfies
$P_{a,a}(z) = 1/(1-F_a(z))$ for $|z|<R$.
In particular, if $1-F_a(z)$ has a real root $r>0$
such that $F_a$ converges and is nonzero on $|z|<r$,
then $r=R$, and hence $h_{\textup{Gur}}(T)=-\log r$.
%In that case, if moreover $F_a'(r)<\infty$, then
%the graph is said to be \emph{positive recurrent},
%and Gurevich proved \cite{Gur70} that $T$ has a unique
%measure $\nu$ of maximal entropy.

%More precisely, $1/r$ is an eigenvalue of the matrix $C$, and
%if $x,y:\calA\to\RR$ are associated right and left eigenvectors
%normalized so that $\sum_{a\in\calA}x(a)y(a)=1$,
%then $\nu$ satisfies $\nu(\{\{a_n\} : a_0=a\})=x(a)y(a)$
%for each $a\in\calA$. In fact, the transition probability
%for $\nu$ from state $a$ to $b$ is $rx(a)/x(b)$.
%%%from which it follows immediately,
%%%as the referee pointed out, that the Jacobian
%%%of $\nu$ is constant, equal to $1/r$.

%If $\calA''\subseteq \calA'$ are two such finite subsets,
%then the associated shifts $T'':Y''\to Y''$ and $T':Y'\to Y'$
%satisfy $h_{\textup{top}}(Y'',T'')\leq h_{\textup{top}}(Y',T')$.
%Thus, if $\calA_1\subseteq \calA_2\subseteq \cdots$ is an
%increasing sequence of finite subsets of $\calA$ with
%$\bigcup_{n\geq 1} \calA_n = \calA$, and if $Y_n\subseteq Y$
%is the associated set of $\calA_n$-sequences consistent with $C$,
%with associated shift $T_n:Y_n\to Y_n$, then
%\begin{equation}
%\label{eq:Gurcomp}
%h_{\textup{Gur}}(Y,T) = \lim_{n\to\infty} h_{\textup{top}}(Y_n,T_n)
%= \sup_{n\geq 1} h_{\textup{top}}(Y_n,T_n).
%\end{equation}
For more on Markov shifts on countably many symbols,
see, for example,
\cite{BBG} \cite[Chapter 1]{GurSav}, \cite[Chapter 7]{Kit},
\cite{Rue}, or \cite{Sar}.

%Gurevich observed that the variational principle
%$$h_{\textup{Gur}}(T)=\sup\{h_{\mu}(Y,T) : \mu\in M(Y,T)\}$$
%follows easily from the variational principle for each $(Y',T')$.

\section{Proof of Theorem~A and Related Results}
\label{sec:proofa}

\begin{lemma}
\label{lem:treepreim}
Let $\phi\in\Cv(z)$ be a nonconstant rational function,
and let $\Gamma\subseteq\HBer$ be a finite tree.  Then $\phi^{-1}(\Gamma)$
is the union of finitely many pairwise disjoint finite trees
$Z_1,\ldots,Z_n$.  Moreover, for each $i=1,\ldots, n$,
we have $\phi(Z_i)=\Gamma$.
\end{lemma}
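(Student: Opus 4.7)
The plan is to show $\phi^{-1}(\Gamma)$ is a finite graph in $\HBer$ containing no cycles; then unique path-connectedness of $\PBerk$ will force each connected component to be a finite tree, and surjectivity onto $\Gamma$ will follow from openness of $\phi$.

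First, since $\phi:\PBerk\to\PBerk$ preserves the type of every point, and $\Gamma\subseteq\HBer$ contains only non-type-I points, one has $\phi^{-1}(\Gamma)\subseteq\HBer$. Next, I would write $\Gamma$ as the union of its finitely many edges $e_1,\ldots,e_k$, where each $e_j=[a_j,b_j]$ is a closed interval between adjacent vertices of $\Gamma$. The key step is to prove that each $\phi^{-1}(e_j)$ is a finite union of closed intervals in $\HBer$. For this I would appeal to the local structure of $\phi$ on Berkovich space: for any $y\in\HBer$ the preimage $\phi^{-1}(y)$ is finite by \eqref{eq:degsum}, and around each preimage point $x$ the map $\phi$ behaves as a branched cover of degree $\deg_x(\phi)$, scaling the hyperbolic metric linearly on sufficiently short intervals through $x$, as noted at the end of Section~\ref{sec:ratact}. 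A compactness argument on $e_j$ then produces finitely many such short preimage arcs whose union is $\phi^{-1}(e_j)$, with arcs meeting only over preimages of $a_j$ or $b_j$.

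Assembling over $j$, the set $\phi^{-1}(\Gamma)$ is a finite graph embedded in $\HBer$: a union of finitely many closed intervals, glued only at the finite set $\phi^{-1}(V)$, where $V$ is the vertex set of $\Gamma$. Because $\PBerk$ is uniquely path-connected, this graph cannot contain a simple closed loop (a loop would yield two distinct simple paths between two of its points inside $\PBerk$), and hence each connected component is a finite tree. Only finitely many such components arise, which I would label $Z_1,\ldots,Z_n$.

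For the final claim $\phi(Z_i)=\Gamma$, note that $\phi(Z_i)$ is compact and connected in $\Gamma$, hence a closed subtree. I would then show it is also open in $\Gamma$. Since the components $Z_1,\ldots,Z_n$ are pairwise disjoint compact subsets of $\PBerk$, any $x\in Z_i$ has an open neighborhood $U\subseteq\PBerk$ with $U\cap\phi^{-1}(\Gamma)\subseteq Z_i$; because $\phi$ is open on $\PBerk$, the image $\phi(U)$ is open, and $\phi(U)\cap\Gamma\subseteq\phi(Z_i)$ supplies the required open neighborhood of $\phi(x)$ inside $\phi(Z_i)$. Connectedness of $\Gamma$ then forces the nonempty clopen set $\phi(Z_i)$ to be all of $\Gamma$. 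The main obstacle in this plan is the local analysis underlying the claim that $\phi^{-1}(e_j)$ is a finite union of intervals: one must invoke the piecewise-linear branched-covering structure of rational maps on $\PBerk$, which is standard but requires care, especially near interior type~II or type~III points of $e_j$ where the local degree may change.
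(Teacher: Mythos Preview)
Your outline is sound and genuinely different from the paper's. Rather than working edge-by-edge, the paper removes the leaf set $S$ of $\Gamma$, lets $W$ be the open connected affinoid component of $\PBerk\smallsetminus S$ containing the interior of $\Gamma$, and invokes \cite[Lemma~9.12]{BR} to write $\phi^{-1}(W)$ as a disjoint union of finitely many open connected affinoids $U_1,\ldots,U_n$ with $\phi(U_i)=W$ and $\phi(\partial U_i)=S$. Setting $Z_i:=\overline{U}_i\cap\phi^{-1}(\Gamma)$, a short path argument gives connectedness, the endpoints of $Z_i$ lie in the finite set $\partial U_i$, and $\phi(Z_i)=\Gamma$ drops out of $\phi(U_i)=W$ with no separate clopen step. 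Your route trades that one structural citation for a hands-on local analysis plus the openness argument for surjectivity --- a perfectly good alternative. One correction to your local picture, however: the preimage arcs over an edge $e_j$ need not meet only over $\phi^{-1}(\{a_j,b_j\})$. At an interior type~II point $y\in e_j$ with preimage $x$, the induced tangent map $T_x\to T_y$ is a rational function of degree $\deg_x(\phi)$ on $\Pk$, and a single direction along $e_j$ may pull back to several directions at $x$; thus $\phi^{-1}(e_j)$ is in general a finite forest with extra branch vertices, not a disjoint union of arcs. Your argument survives once the vertex set is enlarged accordingly, but the finiteness of that set is exactly the piecewise-linear fact you flag as the main obstacle.
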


\begin{proof}
Pick $y_1,\ldots,y_m\in\HBer$ such that $\Gamma$ is the convex hull
of $S:=\{y_1,\ldots,y_m\}$; discarding some of these points if necessary,
we may assume that no $y_i$ lies between any other two $y_j$'s.
Let $W$ be the unique connected component
of $\PBerk\smallsetminus S$ intersecting $\Gamma$.
Note that $W$ is an open connected affinoid (possibly with finitely
many type~IV points removed, if some $y_i$ is of type~IV), with
$\partial W =S$.
By \cite[Lemma~9.12]{BR}, $\phi^{-1}(W)$ is a union of at most
$d$ pairwise disjoint open connected affinoids $U_1,\ldots, U_n$
(again, possibly with finitely many type~IV points removed),
where $\phi(U_i)=W$
and $\phi(\partial U_i)=S$, for each $i=1,\ldots, n$.

For each $i$, let $Z_i := \overline{U}_i \cap \phi^{-1}(\Gamma)$.
% which is a closed subset of $\PBerk$.
For any $z_1,z_2\in Z_i$, the image $\phi((z_1,z_2))$
of the open interval strictly between them is a path between
$\phi(z_1),\phi(z_2)\in \Gamma$ that never hits $S$.
Thus, $\phi([z_1,z_2])\subseteq \Gamma$, and hence $[z_1,z_2]\subseteq Z_i$.
That is, $Z_i$ is connected.  Moreover, since $\phi$ is an open
map, the only points $w\in Z_i$ for which $Z_i\smallsetminus \{w\}$
is still connected
%(i.e., the only possible endpoints of $Z_i$)
are those for which $\phi(w)$ is an endpoint of $\Gamma$.  In other
words, the only endpoints of $Z_i$ belong to
$\phi^{-1}(S)\cap\overline{U}_i = \partial U_i$.
Since each $y_i$ has at most $\deg\phi$ preimages,
$\partial U_i$ is a finite set, and hence $Z_i$ is a finite tree.
Finally, because $\phi(\overline{U_i})=W$, we have $\phi(Z_i)=\Gamma$.
\end{proof}

\begin{proof}[Proof of Theorem~A]
\textbf{Step 1}.
First, we will show
that the backward orbit $\bigcup_{n\geq 0} \phi^{-n}(x_0)$
intersects $I$ as a dense subset.
Let $L$ be the (hyperbolic)
length of $I$, i.e., the $d_{\HH}$-distance between
the two endpoints of $I$.
By hypothesis~(d), each subinterval $I_i\subseteq I$
has hyperbolic length $c_i^{-1}L \leq L/2$
and contains an element of $\phi^{-b_i}(x_0)$.

Partition each $I_i$ into $m$ subintervals
$I^{(2)}_{i,j}:= \phi^{-b_i}(I_j)\cap I_i$,
for $1\leq j\leq m$.  Each $I^{(2)}_{i,j}$ must have length
at most $L/4$ and contain an element of 
$\phi^{-b_i-b_j}(x_0)$.
Continuing in this fashion, we obtain, at the $\ell$-th step,
a partition of $I$ into $m^{\ell}$ subintervals, each of length
at most $L/2^{\ell}$ and containing an elment of the backward
orbit of $x_0$.  Letting $\ell$ increase to infinity, we see
that $I\cap \bigcup_{n\geq 0} \phi^{-n}(x_0)$ is dense in $I$,
as claimed.

\textbf{Step 2}.
Next, we observe that $\Gamma\subseteq \Jul_{\phi}$.  Indeed,
since $x_0\in \Jul_{\phi}$ by hypothesis~(a), and since the Julia set is
closed and invariant under $\phi$, it follows from Step~1 that
$I\subseteq\Jul_{\phi}$.  By hypothesis~(c) and the invariance
of $\Jul_{\phi}$, then, we have $\Gamma\subseteq\Jul_{\phi}$.

\textbf{Step 3}.
Now we show that $\Jul_{\phi}$ is connected.
For every integer $n\geq 0$, let $\Gamma_n:=\phi^{-n}(\Gamma)$,
and let $X_n:=\Gamma_0 \cup \cdots \cup \Gamma_n$.
Then $X_n\subseteq\Jul_{\phi}$, again by the
invariance of the Julia set.

We will show, by induction, that $X_n$ is a finite tree.
By hypothesis, $X_0=\Gamma$ is a finite tree.  If we know $X_n$
is a finite tree, then by Lemma~\ref{lem:treepreim},
$\phi^{-1}(X_n)$ is a finite union
$Z_1\cup\cdots\cup Z_M$ of finite trees, each of which
maps onto $X_n$ under $\phi$.
By hypothesis~(b), then,
since $x_0\in \Gamma\subseteq X_n$, each tree $Z_i$
has a nontrivial intersection with $\Gamma$,
including at least one point of $\phi^{-1}(x_0)$.
Hence, $X_{n+1} = X_n \cup \Gamma$ is a finite union of finite trees,
each of which intersects $\Gamma\subseteq X_{n+1}$.
Since $X_{n+1}\subseteq\PBerk$ has no loops, it is indeed 
a finite tree.

Thus, $X:=\bigcup_{n\geq 0}X_n=\bigcup_{n\geq 0}\Gamma_n \subseteq\Jul_{\phi}$
is connected.
Since $\bigcup_{n\geq 0}\phi^{-n}(x_0)\subseteq X$
is dense in $\Jul_{\phi}$,
then, $\Jul_{\phi}=\overline{X}$ is the closure of a
connected set and hence is connected.

\textbf{Step 4}.
If $\Gamma$ is not contained in an interval, there is a point $y\in \Gamma$
such that $\Gamma\smallsetminus\{y\}$ has at least three components.
Because $\PBerk$ is uniquely path-connected,
$y$ is also a branch point of $\Jul_{\phi}\supseteq \Gamma$.
By the invariance of $\Jul_{\phi}$ and the mapping properties of
residue classes, then, any point $x$ in the backward orbit of $y$
is also a branch point of $\Jul_{\phi}$.
Since $\bigcup_{n\geq 0}\phi^{-n}(y)$ is dense in $\Jul_{\phi}$,
we are done.
\end{proof}

As promised in the introduction, we can prove in some cases
that $\Jul_{\phi}$ has a dense subset of points of
\emph{infinite} branching, i.e.,
points $x\in\Jul_{\phi}$
for which $\Jul_{\phi}\smallsetminus\{x\}$ has infinitely many
connected components.
To state the relevant condition,
we need the following definition.

\begin{defin}
Let  $p:=\charact k$ be the residue characteristic of $\Cv$,
let $\phi\in\Cv(z)$ be a rational function, and let
$x\in\PBerk$ be a point of type~II that is
periodic of period $m\geq 1$.
Let $h\in\PGL(2,\Cv)$ map $x$ to $\zeta(0,1)$,
and let $\psi:=h\circ\phi^m \circ h^{-1}$,
so that the reduction $\overline{\psi}\in k(z)$
has degree $\deg\overline{\psi}\geq 1$.
If $\overline{\psi}$ is purely inseparable,
i.e., if $\overline{\psi}(z)=\eta(z^{p^r})$
for some $\eta\in\PGL(2,k)$ and some $r\geq 0$,
we say that $x$ is a \emph{purely inseparable}
periodic point of $\phi$.
\end{defin}

\begin{thm}
\label{thm:infbranch}
Let $\phi$, $\Gamma$, and $I$ be as in Theorem~A.
Suppose that $\Gamma$ has a branch point $y$ that is
periodic under $\phi$.
Suppose also that $y$ is not a purely inseparable
repelling periodic point.
Then $\Jul_{\phi}$ has a dense set of points $x$
at which $\Jul_{\phi}$ has infinite branching.
\end{thm}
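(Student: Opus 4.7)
The plan is to establish infinite branching of $\Jul_\phi$ at the periodic branch point $y$ itself, then propagate infinite branching to every iterated preimage of $y$, and finally invoke density of the backward orbit in $\Jul_\phi$.

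Step 1: classify $y$. As a branch point of the tree $\Gamma \subseteq \HBer$, $y$ has at least three local directions in $\Gamma$, forcing $y$ to be of type~II (type~III admits only two directions in $\PBerk$ and type~IV only one). Since $y\in\Gamma\subseteq\Jul_\phi$ is a type~II periodic point of period $m$, it is repelling (a standard fact about Berkovich dynamics), so the reduction $\overline{\psi}\in k(z)$ of a suitable conjugate $\psi=h\circ\phi^m\circ h^{-1}$ has degree $d_y := \deg_y(\phi^m)\geq 2$. This reduction records the action of $\phi^m$ on residue classes at $y$, identified with $\PP^1(k)$; the hypothesis now reads that $\overline{\psi}$ is not of the form $\eta(z^{p^r})$.

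Step 2 (the crux): show $\Jul_\phi$ has infinite branching at $y$. Let $S\subseteq\PP^1(k)$ be the set of residue classes at $y$ meeting $\Jul_\phi$. The three or more branches of $\Gamma$ at $y$ occupy distinct residue classes and are contained in $\Jul_\phi$ by Theorem~A, so $|S|\geq 3$. Next $S$ is backward-invariant under $\overline{\psi}$: for any residue class $D$ with $\overline{\psi}(D)\in S$, the map $\phi^m$ surjects $D$ onto $\overline{\psi}(D)$ as open Berkovich disks, so any Julia point of $\overline{\psi}(D)$ lifts to a Julia point of $D$ by total invariance of $\Jul_\phi$. Assuming $|S|$ finite, I factor $\overline{\psi}(z)=\tau(z^{p^r})$ with $\tau\in k(z)$ separable of degree $n:=d_y/p^r$; the non-pure-inseparability hypothesis guarantees $n\geq 2$. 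Since $z\mapsto z^{p^r}$ is a bijection on $\PP^1(k)$, $|\overline{\psi}^{-1}(S)|=|\tau^{-1}(S)|$, so backward invariance combined with Riemann--Hurwitz for $\tau$ yields
$$|S|\;\geq\;|\tau^{-1}(S)|\;=\;n|S|-\sum_{w\in S}\sum_{u\in\tau^{-1}(w)}(e_\tau(u)-1)\;\geq\;n|S|-(2n-2).$$
Rearranging gives $(n-1)(|S|-2)\leq 0$, hence $|S|\leq 2$, a contradiction. Therefore $|S|=\infty$, and since distinct residue classes at $y$ contribute distinct connected components to $\Jul_\phi\smallsetminus\{y\}$, $\Jul_\phi$ has infinite branching at $y$.

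Step 3: propagate and conclude. If $x\in\Jul_\phi$ is type~II with infinite branching and $x'\in\phi^{-1}(x)$, then $x'$ is also of type~II and the local reduction of $\phi$ at $x'$ gives a surjection $\PP^1(k)\to\PP^1(k)$ on residue classes; each residue class at $x$ meeting $\Jul_\phi$ is the image of at least one residue class at $x'$ meeting $\Jul_\phi$ (lift a Julia point via total invariance), and selecting one such lift per residue class at $x$ produces infinitely many distinct residue classes at $x'$ meeting $\Jul_\phi$. Inductively, every point of $\calO^{-}_{\phi}(y)$ has infinite branching, and since this backward orbit is dense in $\Jul_\phi$ by the standard property recalled in Section~\ref{sec:Julia}, the infinite-branching points form a dense subset of $\Jul_\phi$. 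The main obstacle is the Riemann--Hurwitz count in Step~2: one must show that the only rational functions admitting finite backward-invariant subsets of size at least three are the purely inseparable ones, which is exactly the case excluded by hypothesis.
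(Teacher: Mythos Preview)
Your Step~1 contains an incorrectly justified claim: that a type~II periodic point lying in the Julia set must be repelling. This is not a standard fact and is false in general --- indifferent type~II periodic points (local degree~$1$) can lie in $\Jul_\phi$, for instance when the induced M\"obius transformation on $\PP^1(k)$ has infinite order. The paper's proof accordingly splits into two cases. When $\deg\bar\phi \geq 2$ it argues, essentially as you do, via Riemann--Hurwitz that at most two points of $\PP^1(k)$ have finite backward orbit under $\bar\phi$, so one of the (at least three) Julia directions at $y$ has infinite backward orbit. But it also separately treats the case $\deg\bar\phi = 1$, using a classification of indifferent Julia points to produce a residue class at $y$ meeting $\Jul_\phi$ whose associated point of $\PP^1(k)$ is not $\bar\phi$-periodic, hence has infinite backward orbit.

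Your conclusion that $y$ is repelling can in fact be salvaged here, but only by invoking the expansion hypothesis of Theorem~A rather than any general principle. By condition~(c), some iterate $z := \phi^N(y)$ lies in $I$, hence in some $I_i$; condition~(d) then forces $\deg_z(\phi^{b_i}) \geq c_i \geq 2$, since the local degree at a point is at least the hyperbolic stretching factor along each direction there. Multiplicativity of local degree along the periodic orbit then yields $\deg_y(\phi^m) \geq 2$. With this argument supplied, your Steps~2 and~3 are correct and closely parallel the paper's repelling case and its propagation to the backward orbit.
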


\begin{proof}
Replacing $\phi$ by an iterate, we may assume $y$ is fixed.
Since $y$ has at least three residue classes, it must be of type~II.
After a change of coordinates, then, we may assume that
$y=\zeta(0,1)$ is the Gauss point.  Let $\bar{\phi}\in k(z)$ denote
the reduction of $\phi$, so that $\deg(\bar{\phi})\geq 1$.
We claim that $\zeta(0,1)$ is a point of infinite branching of
$\Jul_{\phi}$.

If $\deg(\bar{\phi})=1$, then since $\zeta(0,1)\in\Jul_{\phi}$,
there must be a residue class $U$ of $\zeta(0,1)$ that intersects
$\Jul_{\phi}$ but such that the corresponding point $u$ of $\Pk$
is not periodic under $\bar{\phi}$.
(See \cite[Theorem~6.19]{BenAZ} and the discussion that follows it.)
%incidentally, this case cannot happen if $k=\Fpbar$ is algebraic
%over a finite field.
The backward orbit
$\bigcup_{n\geq 0} \bar{\phi}^{-n}(u)$ in $\Pk$ is therefore
infinite, and hence there are infinitely many corresponding residue
classes $U'$ of $\zeta(0,1)$ whose forward orbits contain $U$.
These infinitely many residue classes therefore all intersect $\calJ_{\phi}$.

On the other hand,
if $\deg(\bar{\phi})\geq 2$, then $\zeta(0,1)$ is a repelling
fixed point;
by hypothesis, then, $\bar{\phi}$ is not purely inseparable.
%Write $\bar{\phi}=\psi(z^{p^r})$, where $p=\charact k$,
%$r\geq 0$, and $\psi\in k(z)$ is separable.
%Since $\bar{\phi}$ is not purely inseparable, we have
%$\deg(\bar{\phi})\geq 2$.
Thus, there are at most two points of $\Pk$
that have finite backward orbit under $\bar{\phi}$.
(Outside of the purely inseparable case,
the Riemann-Hurwitz formula applies, bounding
the number of such exceptional points, just as in
complex dynamics.)
Since there are at least three directions
at $\zeta(0,1)$ intersecting $\Jul_{\phi}$, there must be at
least one such direction $U$ for which the corresponding point
$u\in\Pk$ has infinite backward orbit under $\bar{\phi}$.
As in the previous paragraph,
the infinitely many corresponding preimage
residue classes must all intersect $\calJ_{\phi}$.

Either way, then, infinitely many residue classes
at $\zeta(0,1)$ intersect $\calJ_{\phi}$.
That is, $y$ is a point of infinite branching of
$\Jul_{\phi}$.  As in Step~4 of the proof of Theorem~A, then,
all elements of the backward orbit of $y$ are also points of
infinite branching of $\Jul_{\phi}$.
Since $\bigcup_{n\geq 0} \phi^{-n}(y)$ is dense in $\Jul_{\phi}$,
we are done.
\end{proof}

\section{Computing the entropy}
\label{sec:markov}

The heart of Theorem~B is conclusion (a),
that the countable partition $\calP$
is a one-sided generator of finite entropy.
Being a one-sided generator for $\phi$ means that
the $\sigma$-algebra generated by
$\big\{ \phi^{-n}(U) : U\in\calP \text{ and } n\geq 0 \big\}$
is the full Borel $\sigma$-algebra on $\Jul_{\phi}$.
The finiteness of the entropy, meanwhile, is by the following lemma.

\begin{lemma}
\label{lem:finent}
Let $\phi\in\Cv(z)$, $\mu$, and $\calP$ be as in Theorem~B.
Then
$$\sum_{U\in\calP} -\mu(U) \log(\mu(U)) < \infty.$$
\end{lemma}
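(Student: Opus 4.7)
I plan to reduce the finiteness of $H(\calP) = \sum_U -\mu(U)\log\mu(U)$ to a bound on a mean dynamically-defined cover time, via a geometric lower bound on $\mu(U)$ coming from the Jacobian formula. First, I split $\calP$ by countability: any path-connected subset of the Hausdorff space $\PBerk$ with more than one point is uncountable (the continuous image of $[0,1]$ in a Hausdorff space is either a single point or has the cardinality of the continuum), so each element of $\calP$ is either uncountable or a singleton. Only countably many singletons carry positive $\mu$-mass, and these correspond to atoms of $\mu$; since atoms of the equilibrium measure $\mu$ for a degree-$d$ rational map can only occur at totally ramified (``exceptional'') periodic points, of which there are at most two, the singleton contribution to $H(\calP)$ is automatically a finite sum and may be set aside.

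For each uncountable $U\in\calP$, the ``union of an open set and a countable set'' hypothesis forces the open part of $U$ to be a nonempty open subset of $\Jul_\phi$. Iterating the Markov bijectivity, $\phi^n|_U$ is a bijection of $U$ onto $\bigcup_{W\in S_U^{(n)}} W$ for every $n\geq 0$; the topological fact quoted in Section~\ref{sec:Julia}, that some iterate $\phi^n(V)$ contains $\Jul_\phi$ whenever $V$ is open and meets $\Jul_\phi$, then produces a finite integer $n(U) \geq 0$ with $\phi^{n(U)}(U) = \Jul_\phi$. Combining this with the Jacobian identity~\eqref{eq:jacfmla} and the pointwise bound $\Jac_\phi \leq d$ gives
\[
1 \;=\; \mu(\Jul_\phi) \;=\; \int_U \Jac_{\phi^{n(U)}}(x)\,d\mu(x) \;\leq\; d^{\,n(U)}\mu(U),
\]
so $-\log\mu(U) \leq n(U)\log d$, and summing yields that the uncountable contribution to $H(\calP)$ is bounded above by $(\log d)\int_{\Jul_\phi} n(U_x)\,d\mu(x)$, where $U_x$ denotes the uncountable element of $\calP$ containing $x$.

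It then remains to show $\int n(U_x)\,d\mu(x) < \infty$, which I would establish by proving the super-level sets $\{x : n(U_x) > k\}$ have $\mu$-mass decaying geometrically in $k$. The doubling-or-separation expansion hypothesis, combined with the finite hyperbolic diameter of $\Jul_\phi$, restricts the ``scale'' of any partition element whose iterates have not yet covered $\Jul_\phi$ after $k$ steps, and a geometric argument then bounds the cumulative measure of all such elements. The main obstacle is converting the pairwise expansion hypothesis --- whose separating-or-expanding iterate depends on the particular pair $x,y\in U$ chosen --- into the required uniform scale estimate across partition elements at each cover-time level.
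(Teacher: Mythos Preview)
Your overall architecture matches the paper's: bound $-\log\mu(U)$ by a multiple of a dynamically defined ``cover time'' $n(U)$ via the Jacobian inequality $1=\mu(\phi^{n(U)}(U))\leq d^{n(U)}\mu(U)$, reduce $H(\calP)$ to $(\log d)\sum_U n(U)\mu(U)=(\log d)\sum_{k\geq 1}\mu(\{n(U_x)\geq k\})$, and then show the super-level sets decay geometrically. The paper does exactly this, with $n(U)$ taken to be the first time an iterate of $U$ contains a \emph{fixed} reference element $U_0\in\calP$ (rather than all of $\Jul_\phi$; the two differ by at most a constant $N$).

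Where you diverge is in the proposed mechanism for the geometric decay, and this is a genuine wrong turn. You suggest using the doubling-or-separation hypothesis together with the finite hyperbolic diameter to control the ``scale'' of partition elements with large cover time. But that hypothesis is a pairwise statement with no uniformity in the choice of iterate, and it gives no direct handle on $\mu$-measure; you yourself flag this as the main obstacle, and I do not see a way to push it through. The paper's proof of the lemma does not use the third bullet of Theorem~B at all. Instead, writing $A_n$ for the union of those $U$ whose first $n$ iterates miss $U_0$, the paper shows purely measure-theoretically that
\[
\mu(A_{n+N})\;\leq\;(1-d^{-N})\,\mu(A_n),
\]
where $N$ is fixed so that $\phi^N(U_0)=\Jul_\phi$. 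The two ingredients are: (i) the Markov hypothesis gives $A_{n+N}\subseteq\phi^{-N}(A_n)\cap A_N$; and (ii) since $\phi^N(U_0)=\Jul_\phi$, for each $U\in\calP_n$ one can pull $U$ back bijectively into $U_0$ through $N$ steps, and the Jacobian bound then yields $\mu(\phi^{-N}(A_n)\smallsetminus A_N)\geq d^{-N}\mu(A_n)$. Combining (i), (ii), and invariance of $\mu$ gives the contraction. This is the missing idea in your outline: replace the metric argument by this return-to-$U_0$ measure estimate.

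A minor correction: your treatment of singleton elements appeals to the complex-dynamics fact that atoms of $\mu$ sit at exceptional points. In the non-archimedean setting the correct statement is that $\mu$ has an atom if and only if $\phi$ has potentially good reduction, in which case $\Jul_\phi$ is a single point and the lemma is trivial; otherwise $\mu$ is non-atomic, so every countable element of $\calP$ has measure zero and contributes nothing to $H(\calP)$. The paper handles this the same way.
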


\begin{proof}
If $\calP$ is finite, the statement is trivial.  Thus,
we may assume that $\calP$ is infinite, and hence
that $\Jul_{\phi}$ has more than one point.
As noted in Section~\ref{sec:Julia}, $\Jul_{\phi}$
is therefore uncountable.  Since each
of the countably many elements
of $\calP$ either is countable or contains a nonempty 
open set, at least one $U_0\in\calP$ must
contain a nonempty open set.
By properties of the Julia set,
there must be some $N\geq 1$ such that
$\phi^N(U_0)=\Jul_{\phi}$.

For every integer $n\geq 0$, define
$$\calP_n :=\{U\in\calP : \phi^i(U)\cap U_0 = \varnothing
\text{ for all } i=0,\ldots, n-1\},$$
so that $\calP=\calP_0 \supseteq \calP_1\supseteq \cdots$.
Next, define $A_n:=\bigcup_{U\in \calP_n} U$,
so that $\Jul_{\phi}=A_0 \supseteq A_1 \supseteq \cdots$.

\textbf{Step 1}.
We claim that
$A_{n+N} \subseteq \phi^{-N}(A_n) \cap A_N$
for any $n\geq 0$.
Clearly $A_{n+N}\subseteq A_N$.  Given $U\in\calP_{n+N}$,
then, it suffices to show that $\phi^N(U)\subseteq A_n$.
By the hypotheses of Theorem~B, $\phi^N(U)=\bigcup_{V\in \calR} V$
for some subset $\calR\subseteq\calP$.
Since
$$\phi^i(V)\cap U_0 \subseteq \phi^i(\phi^N(U))\cap U_0
= \phi^{N+i}(U) \cap U_0 = \varnothing$$
for all $V\in\calR$ and all $i=0,\ldots, n-1$,
we have $\calR\subseteq \calP_n$.  Thus,
$\phi^N(U)\subseteq\bigcup_{V\in \calP_n} V = A_n$,
proving the claim.

\textbf{Step 2}.
Next, we claim that
$\mu(\phi^{-N}(A_n)\smallsetminus A_N) \geq d^{-N} \mu(A_n)$
for any $n\geq 0$.
Since $A_n=\bigcup_{U\in\calP_n} U$ is a countable disjoint union,
it suffices to show that
$\mu(\phi^{-N}(U)\smallsetminus A_N) \geq d^{-N} \mu(U)$
for any $U\in\calP_n$.  Fix such a $U$.

By the hypotheses, each iterate $\phi^i(U_0)$ is a union
of elements of $\calP$, each of which in turn maps bijectively onto a
union of elements of $\calP$.  Thus, since $\phi^N(U_0)=\Jul_{\phi}$,
there is some $V_{N-1}\subseteq\phi^{N-1}(U_0)$ contained
in an element of $\calP$ such that $\phi$ maps $V_{N-1}$ bijectively
onto $U$.  By similar reasoning, there is some
$V_{N-2}\subseteq\phi^{N-2}(U_0)$ contained
in an element of $\calP$ such that $\phi$ maps $V_{N-2}$ bijectively
onto $V_{N-1}$.  Continuing in this fashion, there is some
$V_0\subseteq U_0$ such that $\phi^N$ maps $V_0$ bijectively onto $U$.
Since $N\geq 1$, we have
$$V_0\subseteq \phi^{-N}(U)\cap U_0
\subseteq \phi^{-N}(U)\smallsetminus A_N.$$
Thus, by properties of the Jacobian
discussed in Section~\ref{sec:invarmeas}, we have
%equation~\eqref{eq:jacfmla}, we have
\begin{align*}
\mu(U) &= \mu\big(\phi^N(V_0)\big)
= \int_{V_0} \Jac_{\phi^N}(x)\, d\mu(x)
= \int_{V_0} \frac{d^N}{\deg_x(\phi^N)}\, d\mu(x)
\\
& \leq \int_{V_0} d^N \, d\mu(x) = d^N \mu(V_0) \leq
d^N \mu\big(\phi^{-N}(U)\smallsetminus A_N\big),
\end{align*}
proving the claim.

\textbf{Step 3}.
By Steps~1 and~2 and the $\phi$-invariance of $\mu$, we have
\begin{align*}
\mu(A_{n+N}) & \leq \mu\big(\phi^{-N}(A_n) \cap A_N\big)
= \mu\big(\phi^{-N}(A_n)\big) -
\mu\big(\phi^{-N}(A_n) \smallsetminus A_N\big)
\\
& = \mu(A_n) - \mu\big(\phi^{-N}(A_n) \smallsetminus A_N\big)
\leq (1-d^{-N}) \mu(A_n)
\end{align*}
for any $n\geq 0$.
Thus,
\begin{equation}
\label{eq:sumAn}
\sum_{n\geq 1} \mu(A_n)
\leq \Big[ \sum_{m\geq 0} (1-d^{-N})^m \Big]\cdot
\Big[ \sum_{i=1}^N \mu(A_i) \Big] < \infty.
\end{equation}

\textbf{Step 4}.
For any $n\geq 0$, set $\calP'_n:=\calP_n\smallsetminus\calP_{n+1}$.
Applying similar reasoning as in Step~2, observe that
for any $n\geq 0$ and any $U\in\calP'_n$,
there is some $V\subseteq U$ such that
$\phi^n$ maps $V$ bijectively onto $U_0$.
Another computation with the integral of the Jacobian
then shows that $\mu(U_0)\leq d^n \mu(V) \leq d^n \mu(U)$.
Thus, $\mu(U) \geq d^{-n}\mu(U_0)$.

Moreover, any uncountable $U\in\calP$ must contain
a nonempty open set, forcing
$\phi^n(U)=\Jul_{\phi}\supseteq U_0$
for some $n\geq 0$.  Hence $U\in\calP'_n$.
Thus, $\calP':=\bigcup_{n\geq 0}\calP'_n$ consists of all
elements of $\calP$ except the countable ones,
which have measure zero.

We can now bound the entropy of $\calP$ using these observations:
\begin{align*}
h(\calP) &=
\sum_{U\in\calP} -\mu(U) \log(\mu(U))
= \sum_{n\geq 0} \sum_{U\in \calP'_n} - \mu(U) \log(\mu(U))
\\
&\leq
\sum_{n\geq 0} \sum_{U\in \calP'_n}
\mu(U) \big[ n\log d - \log(\mu(U_0)) \big]
\\
& = -\log \mu(U_0) \sum_{U\in\calP'} \mu(U)
+ \log d \sum_{n\geq 0} \sum_{U\in \calP'_n} n \mu(U)
\\
&= -\log \mu(U_0) + \log d \sum_{n\geq 1} \mu(A_n)
<\infty,
\end{align*}
where the final inequality is by Step~3.
\end{proof}

\begin{proof}[Proof of Theorem B]
\textbf{Step 1}.
Let $\calU$ be the set of finite intersections of the form
\begin{equation}
\label{eq:Uint}
U_{0} \cap \phi^{-1}(U_{1}) \cap \cdots \cap \phi^{-n}(U_{n}),
\end{equation}
where $n\geq 0$ and $U_0,\ldots,U_n\in\calP$.
We claim that every element of $\calU$ is path-connected,
proceeding by induction on $n$.  The sets with $n=0$ are
path-connected by hypothesis.  If the claim is known for sets with $n=m$,
an element of $\calU$
with $n=m+1$ is of the form $U \cap \phi^{-1}(V)$, where
$U\in\calP$ and $V$ is path-connected.
Since $\phi(U)$ and $V$ are path-connected, and since $\PBerk$
is uniquely path-connected, $\phi(U)\cap V$ is path-connected.
On the other hand, because $\phi$ is an open map
and, by hypothesis, is injective on $U$, the restriction
$\phi:U \cap \phi^{-1}(V) \to \phi(U)\cap V$ is a homeomorphism.
Thus, $U \cap \phi^{-1}(V)$ is path-connected, proving the claim.

\textbf{Step 2}.
By Lemma~\ref{lem:finent}, $\calP$ has finite entropy.
Recall from Section~\ref{sec:Julia}
that $\Jul_{\phi}$
has a topological basis of countably many open
connected affinoids intersected with $\Jul_{\phi}$.
Each open connected affinoid is, in turn, the intersection
of finitely many open disks.
Thus, to prove statement~(a), it suffices to show that
for any open disk $D\subseteq \PBerk$, the set $D\cap\Jul_{\phi}$
belongs to the $\sigma$-algebra generated by the set $\calU$ of Step~1.

Give such an open disk $D$, let $y\in\HH$ be its unique boundary point.
If $y\not\in\Jul_{\phi}$, then because $\Jul_{\phi}$ is connected,
$D\cap\Jul_{\phi}$ either is empty or is all of $\Jul_{\phi}$;
either way, it is in the $\sigma$-algebra generated by $\calU$.
Thus, we may assume that $y\in\Jul_{\phi}$.

For any $x\in D\cap\Jul_{\phi}$, note that $x\neq y$, since
$y\not\in D$.  We claim that for some large enough
$n\geq 0$, $\phi^n(x)$ and $\phi^n(y)$ lie in distinct
partition elements $U\in\calP$.
Otherwise, by the hypotheses that some iterate of $\phi$ is expanding
on each $U$, and that $\Jul_{\phi}$ has finite $d_{\HH}$-diameter,
there is some $m\geq 0$ such that
$d_{\HH}(\phi^m(x),\phi^m(y)) > \diam_{\HH}(\Jul_{\phi})$,
contradicting the fact that $\phi^m(x),\phi^m(y)\in\Jul_{\phi}$.

Hence, there is some $V_x\in \calU$ such that $x\in V_x$
but $y \not\in V_x$.  Since $V_x$ is connected by the claim
of Step~1, we must have $x\in V_x\subseteq D\cap\Jul_{\phi}$.  Thus,
$D\cap\Jul_{\phi} = \bigcup_{x\in D\cap\Jul_{\phi}} V_x$.
However, since $\calU$ is countable, the union on the right is
in fact a countable union.  Therefore, $D\cap\Jul_{\phi}$
belongs to the $\sigma$-algebra on $\Jul_{\phi}$
generated by $\calU$, proving statement~(a).

\textbf{Step 3}.
Our proof of statement~(b) consists mainly of verifying the
hypotheses of an entropy formula of Rokhlin.
First, as we noted in Section~\ref{sec:Julia}, $\Jul_{\phi}$ is a separable
metrizable space.  As a closed subset of $\PBerk$, it is also compact.
Thus, by \cite[Theorem~15.4.10]{Roy}, it is a Lebesgue space.
(This is a technical condition meaning
that as a measure space, $\Jul_{\phi}$
is isomorphic to $[0,1]$ plus perhaps countably many points.
See also \cite[Theorem~2.1]{Wal}.)
Second, recall from Section~\ref{sec:invarmeas}
that $\mu$ is ergodic on $\Jul_{\phi}$.
Third, $\calP$ is a countable one-sided generator for
$\phi:\Jul_{\phi}\to\Jul_{\phi}$, and of finite entropy,
by statement~(a).
Fourth, $\phi$ is essentially countable-to-one
in the sense of \cite[Definition~2.9.2]{PU}, since
$\phi$ is finite-to-one.

Thus, by Rokhlin's formula
(see, for example, \cite[Theorem~2.9.7]{PU}),
the hypotheses of which we checked in the previous paragraph,
and by equation~\eqref{eq:jacfmla}, we have
$$h_{\mu}(\phi) = \int_{\PBerk} \log \Jac_{\phi}(x) \, d\mu(x)
=\int_{\PBerk} \log\Big( \frac{d}{\deg_x(\phi)} \Big) \, d\mu(x),$$
proving statement~(b).

\textbf{Step 4}.
Let $Y$ denote the space of one-sided symbol sequences described in
part~(c) of Theorem~B: the symbol space is the set $\calA$
of uncountable elements of $\calP$, and symbol
$U\in\calA$ can be followed by symbol $V\in\calA$
if and only if $V\in S_U$.
The topology on $Y$ is inherited from
the product topology on $\calA^{\NN}$, where we
equip the symbol space $\calA$ with the discrete topology.
Let $T:Y\to Y$ denote the shift map on $Y$.

Writing each $U\in\calP$ as the union of an open set
and a countable set $C_U$, let
$Z_0\subseteq\Jul_{\phi}$ be the (countable) union of all of the
countable sets $C_U$.
Then, let 
$Z:=\bigcup_{n\geq 0}\phi^{-n}(Z_0)\subseteq\Jul_{\phi}$ 
be the (countable) set of all points with iterates in $Z_0$.
Let $J:=\Jul_{\phi}\smallsetminus Z$.

%Let $Y'$ be the corresponding space of symbol sequences in $\calP$;
%again, $U\in\calP$ can be followed by symbol $V\in\calP$
%if and only if $V\in S_U$.
%and let $T':Y'\to Y'$ denote the shift map on $Y'$.

Define $Q: J \to Y$
by letting $Q(x)$ be the sequence
$\{U_n\}_{n\geq 0}$, where $U_n$ is the unique element of $\calA$
containing $\phi^n(x)$.  Clearly $Q\circ\phi = T\circ Q$.
Moreover, since $J\subseteq \PBerk$ is Hausdorff,
part~(a) implies that different points in $J$
have different symbol sequences; that is, $Q$ is injective.

\textbf{Step 5}.
Let $W:=Y\smallsetminus Q(J)$.
We claim that $W$ is a countable set.

Given $u=\{U_i\}_{i\in\NN}\in W$, let $E_n\subseteq U_0$ be the
finite intersection given by expression~\eqref{eq:Uint}, for each $n\geq 0$.
Clearly $U_0=E_0\supseteq E_1 \supseteq E_2 \supseteq\cdots$.
%Since $\phi(U_{i+1})\supseteq U_i$ by hypothesis, we have
%$E_n\neq\varnothing$ for all $n\geq 0$.
Define $B_u$ to be the set of all points in $\PBerk$ that
are accumulation points of sequences
$\{x_n\}_{n\geq 0}\subseteq U_0$ with $x_n\in E_n$.
%Since $\PBerk$ is compact and each $E_n$ is nonempty,
%the set $B_u$ is also nonempty.

For each $i\in\NN$, the set
$\tilde{U}_i:=\Jul_{\phi}\smallsetminus (Z_0\cup U_i)$
is a union of opens and hence is open in $\Jul_{\phi}$.
Thus, $\phi^{-i}(\tilde{U}_i)$ is also open.  In addition,
for any sequence $\{x_n\}_{n\geq 0}$ as in the previous paragraph,
we have $x_n\not\in \phi^{-i}(\tilde{U}_i)$ for all $n\geq i$.
Therefore, $B_u\cap \phi^{-i}(\tilde{U}_i)=\varnothing$,
and hence $B_u\subseteq \phi^{-i}(Z_0\cup U_i)$ for all $i\in\NN$.
If there were some $x\in B_u \cap \bigcap_{i\in\NN} \phi^{-i}(U_i)$,
then we would have $u=Q(x)$, contradicting our assumption
that $u\in W$.  Instead, then, we have
$B_u\subseteq \bigcup_{i\in\NN}\phi^{-i}(Z_0)=Z$.

%Note that any set
%of the form in expression~\eqref{eq:Uint} whose symbols $U_i$
%belong to $\calA$ must be open.
%Clearly the set $B_u$ of the previous paragraph
%must be contained in the closure $\overline{E}_n$ of each $E_n$,
%and therefore $B_u\subseteq Z\cup E_n$ for every $n\geq 0$.
%Thus, if $B_u\smallsetminus Z$ contained a point $x$,
%we would have $x\in J$ and $Q(x)=u$,
%contradicting the fact that $u\in W=Y\smallsetminus Q(J)$.

We have shown that $B_u$ is a subset of $Z$ for any $u\in W$.
For the remainder of this step, fix $z\in Z$,
and let $u=\{U_i\}_{i\in\NN}$ and $v=\{V_i\}_{i\in\NN}$
be two distinct symbol sequences in $W$ with $z\in B_u\cap B_v$.
Choose $M\geq 0$ large enough that $\phi^M(z)\in Z_0$,
choose $N\geq 0$ large enough that $U_N\neq V_N$, and
let $n=\max\{M,N\}$.
Define $E_n:=\bigcap_{i=0}^n \phi^{-i}(U_i)$
and $F_n:=\bigcap_{i=0}^n \phi^{-i}(V_i)$, both of which
are connected, by Step~1.  In addition, by our choice of $n$,
we have $E_n\cap F_n=\varnothing$;
thus, $E_n$ and $F_n$ cannot both intersect a common
residue class at $z$.
On the other hand,
although one or the other might contain $z$ itself,
both $E_n$ and $F_n$ contain points other than $z$.

Recall from the end of Section~\ref{sec:Julia}, however,
that only countably many residue classes at $z$ can intersect
$\Jul_{\phi}$.  Since $E_n$ and $F_n$ are subsets of
$\Jul_{\phi}$, we have shown that there can only be countably
many $u\in W$ with $z\in B_u$.  Because $Z$ itself is countable,
we have proven our claim that $W$ is countable.

\textbf{Step 6}.
Define $Y':= Y\smallsetminus W = Q(J)$, so that $Q:J\to Y'$ is bijective.
$Y'$ has a topological basis consisting of cylinder sets, i.e.,
sets given by specifying the first $n$ symbols
$U_0,\ldots,U_n\in\calA$ of the symbol sequence $u\in Y'$.
The inverse image of such a cylinder set under $Q$ is given
by expression~\eqref{eq:Uint} and hence is measurable in $J$.
Thus $Q:J\to Y'$ is (Borel) measurable.
Conversely, by part~(a), any Borel subset $U$ of $Y'$
belongs to the $\sigma$-algebra generated by $\calU$,
%(the set of intersections of the form~\eqref{eq:Uint}),
and hence $Q(U)$ belongs to the $\sigma$-algebra generated
by the cylinder sets on $Y'$, which is the Borel $\sigma$-algebra
on $Y'$.  Thus, the bijection $Q:J\to Y'$ gives an isomorphism of
(Borel) $\sigma$-algebras.

Recalling from Step~4 that $Q\circ\phi = T\circ Q$, then,
there is a one-to-one correspondence between 
the set $M(J,\phi)$ of $\phi$-invariant
Borel probability measures on $J$ and
the set $M(Y',T)$ of $T$-invariant
Borel probability measures on $Y'$
Moreover, if $\mu\in M(J,\phi)$ and $\nu\in M(Y',T)$
are corresponding such measures,
then the entropies $h_{\mu}(J,\phi)$ and $h_{\nu}(Y',T)$ clearly coincide.

Meanwhile, since $Z=\Jul_{\phi}\smallsetminus J$ is $\phi$-invariant,
any measure $\mu\in M(J,\phi)$ extends to a measure
in $M(\Jul_{\phi},\phi)$ via $\mu(Z)=0$.
Conversely, since $Z$ is countable,
any measure $\mu\in M(\Jul_{\phi},\phi)$
either has $\mu(Z)=1$ and hence has entropy zero,
or else it induces a measure $\tilde{\mu}\in M(J,\phi)$,
given by $\tilde{\mu}(E)=\mu(E\cap J)/\mu(J)$, of
greater entropy.
A similar relationship applies to $M(Y,T)$ and $M(Y',T)$,
since $W$ is also countable and $T$-invariant.

Because $\Jul_{\phi}$ is a compact metrizable space,
the variational principle applies to it.
%(See, for example, \cite[Theorem~8.6]{Wal}.)
Meanwhile, Gurevich proved \cite{Gur70} (see also \cite[Theorem~1.3]{Rue})
that $h_{\textup{Gur}}(T)$
is the supremum of all the
measure-theoretic entropies of $T:Y\to Y$.
Thus,
\begin{align*}
h_{\textup{top}}(\phi)
&= \sup\{h_{\mu}(\Jul_{\phi}, \phi) : \mu\in M(\Jul_{\phi},\phi) \}
= \sup\{h_{\mu}(J, \phi) : \mu\in M(J,\phi) \}
\\
&= \sup\{h_{\nu}(Y',T) : \nu\in M(Y',T) \}
= \sup\{h_{\nu}(Y,T) : \nu\in M(Y,T) \}
=h_{\textup{Gur}}(T).
\qedhere
\end{align*}
\end{proof}

\section{A degree 6 example}
\label{sec:examples}

\subsection{With arbitrary residue field}
\label{ssec:sextic}
Fix $a\in\Cv^{\times}$ with $0<|a|_v<1$.  Fix $b\in\Cv$ with
$|b|_v=|b-1|_v=1$,
and define
$$\phi(z) = \frac{az^6 + 1}{az^6 + z(z-1)(z-b)}
= 1 + \frac{1-z(z-1)(z-b)}{az^6 + z(z-1)(z-b)}.$$
%(For example, if the residue characteristic $p=\charact k$
%is not $2$, then we could choose $b=-1$, so that the denominator
%of $\phi$ is $az^6 + z^3 - z$.)
Let $\Gamma\subseteq \PBerk$ be the convex hull
of the four points
$$\zeta(0,|a|_v^{-1/2}),
\zeta(0,|a|_v^{1/2}),
\zeta(1,|a|_v^{1/2}),
\text{ and }
\zeta(b,|a|_v^{1/2}).$$
Thus, $\Gamma$ is a tree consisting of four segments,
which we denote $I$, $J_0$, $J_1$, and $J_b$,
extending from the Gauss point $\zeta(0,1)$ to the
above four points, respectively.  See Figure~\ref{fig:sextictree}.

\begin{figure}
\label{fig:sextictree}
\includegraphics{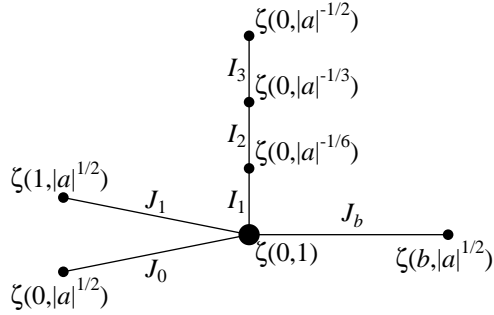}
\caption{The tree $\Gamma$ for the map of Section~\ref{ssec:sextic}.}
\end{figure}

Break $I$ into the following three equal-length pieces:
$$I_1=[\zeta(0,1),\zeta(0,|a|_v^{-1/6})], \;
I_2=[\zeta(0,|a|_v^{-1/6}),\zeta(0,|a|_v^{-1/3})],$$
$$\text{and} \quad I_3=[\zeta(0,|a|_v^{-1/3}),\zeta(0,|a|_v^{-1/2})].$$
Note that the Gauss point $x_0=\zeta(0,1)\in I$ is fixed and repelling,
since the reduction
$\overline{\phi}(z) = 1/(z(z-1)(z-\overline{b}))$ has degree $3>1$.
In particular, $x_0\in\Jul_{\phi}$.
Meanwhile, recalling the definition of the symbol
$\approx$ from \eqref{eq:approxdef}, observe that:
\begin{itemize}
\item
$\phi(\zeta(0,r))=\zeta(0,1/r^3)$ for $1<r<|a|_v^{-1/6}$,
since $\phi(z)\approx 1/z^3$ for $1<|z|_v<|a|_v^{-1/6}$.  Thus, $\phi$
maps $I_1$ bijectively onto  $J_0$,
stretching $d_{\HH}$ by a factor of $3$.
\item
$\phi(\zeta(0,r))=\zeta(0,|a|_vr^3)$
for $|a|_v^{-1/6} < r < |a|_v^{-1/3}$,
since $\phi(z)\approx az^3$ for $|a|_v^{-1/6}<|z|<|a|_v^{-1/3}$.
Thus, $\phi$
maps $I_2$ bijectively onto $J_0$,
again stretching $d_{\HH}$ by a factor of $3$,
but this time oriented in the opposite direction.
\item
$\phi(\zeta(0,r))=\zeta(1,|a|_v^{-1} r^{-3})$
for $|a|_v^{-1/3} < r < |a|_v^{-1/2}$,
since $\phi(z)-1\approx -1/(az^3)$
for $|a|_v^{-1/3}<|z|_v<|a|_v^{-1/2}$.  Thus, $\phi$
maps $I_3$ bijectively onto $J_1$,
stretching $d_{\HH}$ by a factor of $3$.
\end{itemize}
In addition, $\phi$ maps each of $J_0$, $J_1$, and $J_b$ bijectively,
and in fact isometrically, onto $I$.

Therefore, $\phi^2$ maps each of $I_1$, $I_2$, and $I_3$ onto $I$,
stretching distances on each by a factor of $3$.  In addition,
$\Gamma\subseteq I\cup \phi^{-1}(I)$, and $\phi$ maps $\zeta(0,|a|_v^{-1/3})$
to the Gauss point $x_0=\zeta(0,1)$ with local degree $3$.
Hence, $\phi^{-1}(x_0)=\{x_0, \zeta(0,|a|_v^{-1/3})\}\subseteq \Gamma$,
and thus $x_0\in I\subseteq \Gamma$ satisfies the hypotheses of Theorem~A
for $\phi$.
As a result, the Julia set $\Jul_{\phi}$ is connected
and has a dense set of branch points.
In fact, since
$\overline{\phi}(z) = 1/(z(z-1)(z-\overline{b}))$ is separable,
those branch points have infinite branching, by Theorem~\ref{thm:infbranch}.

\subsection{Intervals of measure zero}
\label{ssec:leaves6}

Favre and Rivera-Letelier \cite{FRL3} have announced a proof
of the following fact: if the invariant measure $\mu_{\psi}$ of a rational
function $\psi\in\Cv(z)$ charges an interval in $\HH$, then the
Julia set $\Jul_{\psi}$ is also contained in an interval.
In particular, the interval $I$ contained in our
Julia set $\Jul_{\phi}$ of Section~\ref{ssec:sextic}
must have mass $\mu_{\phi}(I)=0$.

We can also see this identity directly for our map $\phi$, as follows.
As we noted, $\phi^2$ maps each of $I_1$,
$I_2$, and $I_3$ bijectively onto $I$, with $\deg_x(\phi^2)=3$
for all $x\in I$ other than the (finitely many) endpoints of
$I_1$, $I_2$, and $I_3$.
By the Jacobian formula~\eqref{eq:jacfmla}, for each $i=1,2,3$,
we have
$$\mu_{\phi}(I) = \mu_{\phi}(\phi^2(I_i))
=\int_{I_i} \Jac_{\phi^2}(x) \, d\mu_{\phi}(x)
=\int_{I_i} \frac{6^2}{3} \, d\mu_{\phi}(x)
= 12\mu_{\phi}(I_i).$$
Thus, since $I$ is the union of $I_1$, $I_2$, and $I_3$, any two
of which intersect on a set of measure zero, we have
$$\mu_{\phi}(I)
= \mu_{\phi}(I_1) + \mu_{\phi}(I_2) + \mu_{\phi}(I_3) 
= \frac{1}{4}\mu_{\phi}(I),$$
proving our claim that $\mu_{\phi}(I)=0$.

It follows immediately from this observation
that $\mu_{\phi}(\bigcup_{n\geq 0} \phi^{-n}(I)) = 0$.
As noted in the proof of Theorem~A, the Julia set $\Jul_{\phi}$
is the closure of $\bigcup_{n\geq 0} \phi^{-n}(I)$. 
However, viewing $\Jul_{\phi}$ as a tree with infinite branching,
this set $\bigcup_{n\geq 0} \phi^{-n}(I)$ includes
all of the interior points of the tree. Thus, the set $L$ of
leaves --- that is, $L$ consists of those $x\in\Jul_{\phi}$
for which $\Jul_{\phi}\smallsetminus\{x\}$ is still connected
--- has mass $\mu_{\phi}(L)=1$.

\subsection{The entropy in residue characteristic 3}
\label{ssec:ent6}

We will now compute the measure-theoretic and topological entropies
of the sextic map $\phi$ of Section~\ref{ssec:sextic}
when the residue characteristic $\charact k$ is $3$,
when $|3|_v\leq |a|_v < 1$, and when neither of
$1,\bar{b}\in \PP^1(k)$ is postcritical
under the action of $\bar{\phi}(z)=1/(z(z-1)(z-\bar{b}))$.
Since the only critical points of $\bar{\phi}$ are $\infty$ 
(with forward orbit $\{0,\infty\}$) and
$\gamma:=-\bar{b}/(\bar{b}+1)$, this last condition says simply that
$1$ and $\bar{b}$ are not in the forward
orbit of $\gamma$. In particular, the condition holds for
the map given in the introduction, which
has $b=-1$, and hence $\gamma=\infty$.

\begin{claim}
\label{claim:fatou}
Let $y:=\zeta(0,|a|_v^{-1/6})$. Then
$$\phi(y) = \zeta(0,|a|_v^{1/2}), \quad
\phi^2(y) = \phi^{4}(y)=\zeta(0,|a|_v^{-1/2}), \quad\text{and}\quad
\phi^3(y) = \zeta(1,|a|_v^{1/2}),$$
with $\deg_y(\phi)=6$ and $\deg_{\phi^2(y)}(\phi^2)=3$.
Moreover, setting
$$W:=\{x\in\PBerk : |x|_v=|a|_v^{-1/6}\}\smallsetminus\{y\},$$
we have $\phi(W)=\DbarBerk(0,|a|_v^{1/2})\smallsetminus\{\phi(y)\}$,
and
$$\phi^2(W)=\phi^4(W)=
\{x\in\PBerk : |x|_v \geq |a|_v^{-1/2} \} \smallsetminus
\{\zeta(0,|a|_v^{-1/2})\}.$$
In particular, $\phi^i(W)\subseteq\Fat_{\phi}$ for all $i\geq 0$.
\end{claim}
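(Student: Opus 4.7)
The plan is to proceed in three stages: first to compute the orbit $y\mapsto\phi(y)\mapsto\cdots\mapsto\phi^4(y)$ with local degrees via the approximation tools of Section~\ref{sec:ratact}; second to read off the images $\phi^i(W)$; third to deduce the Fatou inclusion by applying a global Berkovich Julia-set property together with $\phi$-invariance of $\Fat_\phi$.

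For the first stage, only $\phi(y)$ itself requires a change of variable: picking $\pi\in\Cv^\times$ with $|\pi|_v=|a|_v^{1/6}$ and substituting $z=\pi^{-1}u$ moves $y$ to the Gauss point in $u$-coordinates, and after clearing a common factor of $\pi^3$ from numerator and denominator of $\phi(\pi^{-1}u)$, the rescaled function has reduction $\bar{R}(\bar{u}) := (\bar{\alpha}\,\bar{u}^6 + 1)/\bar{u}^3 \in k(\bar{u})$, with $\alpha=a\pi^{-6}\in\ints^\times$. This degree-$6$ reduction and case~(b) of Section~\ref{sec:ratact} give $\phi(y)=\zeta(0,|a|_v^{1/2})$ and $\deg_y\phi=6$. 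The remaining three iterates come from case~(c): on the annulus $|z|_v=|a|_v^{1/2}<1$ one has $(z-1)(z-b)\approx b$ while $az^6$ is negligible, so $\phi(z)\approx 1/(bz)$, a M\"obius map giving $\phi^2(y)=\zeta(0,|a|_v^{-1/2})$ with local degree $1$; on $|z|_v=|a|_v^{-1/2}$ both numerator and denominator are dominated by $az^6$ and cancellation yields $\phi(z)-1\approx -1/(az^3)$, a degree-$3$ map producing $\phi^3(y)=\zeta(1,|a|_v^{1/2})$ with local degree $3$; on $|z-1|_v=|a|_v^{1/2}$, factoring $z(z-1)(z-b)\approx (1-b)(z-1)$ gives $\phi(z)\approx 1/((1-b)(z-1))$, again M\"obius, sending $\zeta(1,|a|_v^{1/2})$ back to $\zeta(0,|a|_v^{-1/2})=\phi^2(y)$ with local degree $1$. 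The chain rule then gives $\deg_{\phi^2(y)}(\phi^2)=3\cdot 1 = 3$.

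For the second stage, note that $W$ is the union of those residue classes at $y$ whose direction $\bar{u}\in\Pk$ lies in $k^\times$. Since $\bar{R}^{-1}(\infty)=\{0,\infty\}$, each such residue class maps into a residue class of $\phi(y)$ with direction in $k$, i.e., into $\DbarBerk(0,|a|_v^{1/2})\smallsetminus\{\phi(y)\}$; conversely, $k$ is algebraically closed, so the polynomial $\bar{\alpha}\,\bar{u}^6 - \beta\,\bar{u}^3 + 1$ has a root in $k^\times$ for every $\beta\in k$, meaning $\bar{R}\colon k^\times\to k$ is surjective, which gives the reverse inclusion. The remaining three image descriptions follow by transferring the M\"obius and degree-$3$ approximations from the first stage across the full Berkovich regions $\DbarBerk(0,|a|_v^{1/2})$, $\PBerk\smallsetminus\DBerk(0,|a|_v^{-1/2})$, and $\DbarBerk(1,|a|_v^{1/2})$ respectively; each approximation is valid on its entire source by direct estimation of the terms, and the corresponding map transports the source onto the target with the boundary point $\phi^j(y)$ as the unique preimage, in that source, of the boundary point removed from the target.

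Finally, for the third stage, the relation $\phi^4(W)=\phi^2(W)$ implies that the forward $\phi$-orbit of $W$ is contained in $W\cup\phi(W)\cup\phi^2(W)\cup\phi^3(W)$, and none of these four sets contains the Gauss point $x_0=\zeta(0,1)\in\Jul_\phi$: $|x_0|_v=1$ is neither equal to $|a|_v^{-1/6}$, nor at most $|a|_v^{1/2}$, nor at least $|a|_v^{-1/2}$, and $x_0\notin\DbarBerk(1,|a|_v^{1/2})$ because $\Dbar(0,1)\not\subseteq\Dbar(1,|a|_v^{1/2})$. By the contrapositive of \cite[Theorem~10.56(B')]{BR}, any open set none of whose forward iterates contains $\Jul_\phi$ must be disjoint from $\Jul_\phi$, so $W\subseteq\Fat_\phi$, and then $\phi$-invariance of $\Fat_\phi$ propagates this to $\phi^i(W)\subseteq\Fat_\phi$ for every $i\ge 0$. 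The most delicate step is the bookkeeping in stage two: at each iteration one must verify that the removed boundary point on the source side is the unique preimage of the removed boundary point on the target side, which requires matching the local-degree sequence $6,1,3,1$ against $\deg\phi=6$ to rule out competing preimages lying in the region in question.
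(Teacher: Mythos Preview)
Your proof is correct and follows essentially the same route as the paper's (very terse) argument: reduce at $y$ via a coordinate change to get $\bar\psi(u)=(\bar\alpha u^6+1)/u^3$, then push through the three annular approximations $\phi(z)\approx 1/(bz)$, $\phi(z)-1\approx -1/(az^3)$, and $\phi(z)\approx 1/((1-b)(z-1))$ to track both the orbit of $y$ and the regions $\phi^i(W)$, and finally use that the forward orbit of $W$ is confined to a set missing $\Jul_\phi$. One wording slip: what you call ``clearing a common factor of $\pi^3$'' is really the target rescaling $w\mapsto \pi^{-3}w$ (equivalently, the paper's $\psi(z)=a^{-1/2}\phi(a^{-1/6}z)$); there is no literal common factor to clear, but your stated reduction $(\bar\alpha u^6+1)/u^3$ is the correct one. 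Your Fatou step, via the contrapositive of \cite[Theorem~10.56(B')]{BR} and the omission of the Gauss point, is a clean variant of the paper's ``omits infinitely many points'' phrasing.
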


\begin{proof}
Let $\psi(z) := a^{-1/2}\phi(a^{-1/6}z)$, so that
$\bar{\psi}(z)=(z^6+1)/z^3$.
Thus, $\deg\bar{\psi}=6$, and hence
we have $\phi(y) = \zeta(0,|a|_v^{1/2})$, with $\deg_y(\phi)=6$.
Similar computations confirm the claimed values of $\phi^i(y)$
for $i=2,3,4$, and that $\deg_{\phi^2(y)}(\phi^2)=3$.

The characterization of each $\phi^i(W)$ 
also comes from direct computation.
Since $\phi^j(W)=\phi^2(W)$ for $j\geq 2$ even, and $\phi^j(W)=\phi^3(W)$
for $j\geq 3$ odd, the union of the forward images of any $\phi^i(W)$
omits infinitely many points of $\PBerk$, and hence
$\phi^i(W)\subseteq\Fat_{\phi}$.
\end{proof}

Define $V_0:=\{\phi(y)\} =\{\zeta(0,|a|_v^{1/2})\}$,
$V_\infty:=\{\phi^2(y)\} =\{\zeta(0,|a|_v^{-1/2})\}$,
and $V_1:=\{\phi^3(y)\} =\{\zeta(1,|a|_v^{1/2})\}$.
By Claim~\ref{claim:fatou}, each of $V_0$, $V_1$, $V_\infty$
is an endpoint of $\calJ_{\phi}$.

The set
$\Jul_{\phi}\smallsetminus\{\zeta(0,1)\}$ consists of countably
many branches: one extending towards $\infty$, and the
rest of the form $U_c:=\Jul_{\phi}\cap \DBerk(c,1)$ where $c\in k$.
(Here, we abuse notation by lifting $c\in k$ to an element of $\ints$,
which we also denote $c$.)
Let $\calC\subseteq k$ be the set of those $c\in k$
for which $U_c\neq\varnothing$,
and let $\calC':=\calC\smallsetminus\{0,1\}$.
Define $U'_0:=U_0\smallsetminus V_0$ and $U'_1:=U_1\smallsetminus V_1$.
Meanwhile, let
$$
U_{\infty,1} :=
\{x\in\Jul_{\phi}:|x|_v \geq |a|_v^{-1/6}\}\smallsetminus V_\infty,
\quad\text{and}\quad
U_{\infty,2} := \{x\in\Jul_{\phi}: 1< |x|_v < |a|_v^{-1/6}\}.$$
Finally, let $V=\{\zeta(0,1)\}$.
Clearly,
$$\calP:=\{V,V_0,V_1,V_\infty,
U_{\infty,1},U_{\infty,2},U'_0,U'_1\}\cup\{U_c:c\in\calC'\}$$
is a countable partition of $\Jul_{\phi}$,
where $V$, $V_0$, $V_1$, and $V_\infty$ are
singletons, $U_{\infty,1}$ is the union
of an open set and the singleton $\{y\}$,
and the remaining elements of $\calP$ are open in $\Jul_{\phi}$.

We have $\phi(V)=V$, $\phi(V_0)=\phi(V_1)=V_{\infty}$, and 
$\phi(V_\infty)=V_1$.
In the notation of Section~\ref{ssec:sextic},
$U_{\infty,2}\cap \Gamma$ is the interval $I_1$
with its endpoints removed, while
$U_{\infty,1}\cap \Gamma=I_2\cup I_3$.
Thus,
$\phi(U_{\infty,2})=U'_0$,
$\phi(U_{\infty,1})=\Jul_{\phi}$, and
$$\phi(U'_0)=\phi(U'_1)= U_{\infty,1}\cup U_{\infty,2},
\quad\text{while}\quad
\phi(U_{\bar{b}}) = U_{\infty,1}\cup U_{\infty,2}\cup V_{\infty}.$$
There are also three values of $c$ (namely, the roots of
$\bar{\phi}(c)=1$) for which $\phi(U_c)=U'_1\cup V_1$.
Finally, $\phi(U_c) = U_{\bar{\phi}(c)}$ for every other
$c\in\calC'\smallsetminus\{\bar{b}\}$.
Note, by our assumed condition on $b$, that $\phi$ is injective
on $U'_0$, $U'_1$, and each $U_c$,
with local degree $1$ at all points of these sets.
%Clearly $\phi$ is also
%injective on each of $V$, $V_0$, $V_1$, and $V_\infty$.

%To finish verifying the hypotheses of Theorem~B,
%we will need the following claim.

\begin{claim}
\label{claim:temp}
With notation as above, $\deg_x(\phi)=3$ for all
$x\in U_{\infty,1}\cup U_{\infty,2}\smallsetminus\{y\}$.
Moreover, $\Jul_{\phi}$ has finite hyperbolic diameter,
and $\phi$ is injective on both
$U_{\infty,1}$ and $U_{\infty,2}$.
\end{claim}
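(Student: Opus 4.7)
The plan is to verify the three assertions using the approximations for $\phi$ on the various annular regions from Section~\ref{ssec:sextic}, combined with a residue-characteristic-$3$ analysis of the Taylor coefficients to compute local degrees, together with Claim~\ref{claim:fatou} to control the Julia/Fatou dichotomy.

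For the local degree, I would apply item~(iii) of Section~\ref{sec:ratact} to the approximations $\phi(z)\approx 1/z^3$, $\phi(z)\approx az^3$, and $\phi(z)-1\approx -1/(az^3)$ valid on the respective regimes. The crucial residue-characteristic-$3$ observation is that the Taylor expansion of $1/z^3$ at a nonzero point $c$ has its cubic coefficient much larger (in $|\cdot|_v$) than its linear one, because $|3|_v<1$. Concretely, for a Berkovich disk $\Dbar(c,r)$ with $|c|_v=s$ and $r<s$, the cubic term $r^3/s^6$ dominates the linear term $|3|_v r/s^4$ precisely when $r\ge s|3|_v^{1/2}$; so such ``wide'' disks have local degree $3$, while narrow ones have local degree $1$. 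The required sub-claim is that every Julia point in $U_{\infty,1}\cup U_{\infty,2}$ other than $y$ lies in a wide disk, yielding $\deg_x(\phi)=3$.

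Granted the local degree conclusion, injectivity of $\phi|_{U_{\infty,j}}$ follows from a degree count. The global formula~\eqref{eq:degsum} gives $\deg\phi=6$, so a Julia image point $y'\in U'_0$ has Julia preimages with local degrees summing to $6$. Since $\phi$ maps both $U_{\infty,1}$ and $U_{\infty,2}$ onto sets containing $U'_0$, each of these sets contributes at least one preimage of $y'$; combined with Claim~\ref{claim:fatou} (which separates $U_{\infty,1}$ from $U_{\infty,2}$ via the Fatou region $W$), these preimages are distinct, forcing exactly one preimage in each of local degree $3$, and hence set-theoretic injectivity of $\phi|_{U_{\infty,j}}$. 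For finite hyperbolic diameter of $\Jul_\phi$: because $\phi$ is $3$-expanding on each $I_i$ (and isometric on $J_0,J_1,J_b$), each new generation of preimages of $\Gamma$ contributes branches whose $d_\HH$-lengths form a geometrically convergent sum, yielding a finite $d_\HH$-diameter after verifying that the closure of the backward-orbit tree remains within $\HBer$.

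The main obstacle is the local-degree sub-claim: precisely identifying the Julia set structure in the annular regions and verifying that every Julia point other than $y$ corresponds to a ``wide'' Berkovich disk of local degree $3$. My plan would be to combine iteration-contraction arguments using the approximations from Section~\ref{ssec:sextic} with the explicit Fatou region $W$ from Claim~\ref{claim:fatou}, ruling out that any Julia point corresponds to a narrow local-degree-$1$ disk.
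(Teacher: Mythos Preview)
Your plan is essentially the paper's approach for the first and third assertions: the paper also expands $\phi(\alpha+w)$ around $\alpha$ in each of the three annular regimes, observes that the cubic term dominates precisely when the disk is ``wide'' (which, given the hypothesis $|3|_v\leq |a|_v$, reduces to $r\geq 1$), and then shows the narrow disks $\Dbar(\alpha,1)$ iterate into one another or into the Fatou region $\phi^2(W)$ from Claim~\ref{claim:fatou}. Your identification of this Fatou/Julia dichotomy as ``the main obstacle'' is exactly right, and your proposed attack (iteration-contraction plus Claim~\ref{claim:fatou}) is what the paper does. The injectivity argument via the degree-sum formula~\eqref{eq:degsum} is likewise the same; you should note that it requires knowing where \emph{all} preimages of a given Julia point live (in the $U_c$'s with local degree~$1$ as well as in $U_{\infty,1}$ and $U_{\infty,2}$), not just those in $U'_0$.

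The one place you diverge from the paper is the finite-diameter argument, and here your formulation needs care. The claim that ``each new generation of preimages of $\Gamma$ contributes branches whose $d_{\HH}$-lengths form a geometrically convergent sum'' is not correct as stated: since $\phi$ is an isometry on each $U_c$, pulling $J_1$ or $J_b$ back through successive $U_c$'s produces, at every generation $n$, new branches of the \emph{full} length $\tfrac{1}{2}\log|a|_v^{-1}$ attached at $\zeta(0,1)$. What is true is that along any single \emph{path} in the backward-orbit tree the branch lengths decay geometrically, because each pass through the $U_\infty$ region contracts by a factor of $3$; but this requires a more careful bookkeeping of path structure rather than a sum over generations. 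The paper sidesteps this entirely: the very same disk-iteration argument that shows narrow disks are Fatou yields explicit bounds $1\leq r\leq |\alpha|_v\leq |a|_v^{-1/2}$ (for $|\alpha|_v>1$) and $|a|_v\leq r\leq |\alpha|_v\leq 1$ (otherwise) on every Julia point $\zeta(\alpha,r)$, from which the diameter bound is immediate. So in the paper, finite diameter is not a separate argument but a free consequence of the local-degree analysis.
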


\begin{proof}
For $\alpha, w \in\Cv$ with
$1< |w|_v < |\alpha|_v < |a|_v^{-1/6}$, we have
$\phi(\alpha+w) - \alpha^{-3} \approx -\alpha^{-6} w^3$,
and thus $\phi:\Dbar(\alpha,r) \to \Dbar(\alpha^{-3}, |\alpha|_v^{-6} r^3)$
is $3$-to-$1$ for all $1\leq r\leq |\alpha|_v$.
(Here, we are using the hypothesis that $|3|_v\leq |a|_v$.)
Hence, $\deg_{\zeta(\alpha,r)}(\phi)=3$ for all such $\alpha,r$.
In addition, we have
$\phi(\Dbar(\alpha,1))=\Dbar(\alpha^{-3},|\alpha|_v^{-6})$.
Since $\phi(z)\approx 1/bz$ for $z\in D(0,1)$,
it follows that $\phi^2:\Dbar(\alpha,1) \to \Dbar(b^{-1}\alpha^3, 1)$.
%In addition, since $\phi(z)\approx 1/bz$ for $z\in D(0,1)$,
%it follows that $\phi^2:\Dbar(\alpha,r) \to \Dbar(b^{-1}\alpha^3, r^3)$.
%%and that this mapping is also $3$-to-$1$.

Similarly, if $|a|_v^{-1/6} < |\alpha|_v < |a|_v^{-1/3}$
and $1\leq r\leq |\alpha|_v$, then
$\phi:\Dbar(\alpha,r)\to\Dbar(a\alpha^3, |a|_v r^3)$
is also $3$-to-$1$, with
$\phi(\Dbar(\alpha,1)) = \Dbar(a\alpha^3, |a|_v)$.
%$\phi^2(\Dbar(\alpha,r)) = \Dbar((ab\alpha^3)^{-1}, |a\alpha^6|_v^{-1}r^3)$.
Likewise, if $|a|_v^{-1/3} < |\alpha|_v < |a|_v^{-1/2}$
and $1\leq r \leq |\alpha|_v$, then
$\phi:\Dbar(\alpha,r)\to\Dbar(1/(1+a\alpha^3), |a\alpha^6|_v^{-1}r^3)$
is again $3$-to-$1$, with
$\phi(\Dbar(\alpha,1))= \Dbar((a(b-1)\alpha^3)^{-1}, |a|_v)$.
%$\phi^2(\Dbar(\alpha,r))= \Dbar((a(b-1)\alpha^3)^{-1}, |a|_v r^3)$.
In either case, we get
$\deg_{\zeta(\alpha,r)}(\phi)=3$ for all such $\alpha,r$,
and $\phi^2(\Dbar(\alpha,1))\subseteq \Dbar(\phi^2(\beta,1))$
with $|\beta|_v>1$.

If $1\leq r \leq|\alpha|_v = |a|_v^{-1/3}$,
then $\deg_{\zeta(\alpha,r)}(\phi)=3$,
although the point $\phi(\zeta(\alpha,r))$ could lie in $\DbarBerk(0,1)$.
In fact, the image $\phi(D)$ of the disk $D=\Dbar(\alpha,1)$ is of the form
$\Dbar(\beta,|a\beta^2|_v)$ with $|\beta|_v\geq 1$.
If $|\beta|_v\leq |a|_v^{-1/2}$, so that $\phi(D)$
is not contained in $\phi^2(W)\subseteq\Fat_{\phi}$,
then the radius of $\phi(D)$ is at most $1$.
In addition, if $|\beta|_v=1$, then $\phi(D)=\Dbar(\beta,|a|_v)$.

One consequence of the previous three paragraphs is that for any disk
$D=\DbarBerk(\alpha,1)$ with $1<|\alpha|_v < |a|^{-1/2}$, 
then either $D\subseteq W$, or 
\begin{itemize}
\item $\phi(D)$ is of the form
$\DbarBerk(\beta,s)$, where $|\beta|_v\leq 1$ and $s\leq |a|_v$, or
\item $\phi(D)$ or $\phi^2(D)$ is of the form
$\DbarBerk(\beta,s)$, where $|\beta|_v> 1$ and $s\leq 1$.
\end{itemize}
Similarly, if $D=\DbarBerk(\alpha,|a|_v)$ with $|\alpha|_v\leq 1$,
we get the same conclusions.
All such disks therefore map into one another, and hence they all
lie in the Fatou set.  Thus,
$\Jul_{\phi}\smallsetminus \DbarBerk(0,1)$
consists only of points of the form $x=\zeta(\alpha,r)$ with
$r\geq 1$.  By the same three paragraphs,
$\deg_{x}(\phi)=3$ for all
$x\in U_{\infty,1}\cup U_{\infty,2}\smallsetminus\{y\}$,
proving the first statement of the claim.

Recall that $U_{\infty,1}\smallsetminus\{y\}$
maps onto $\Jul_{\phi}\smallsetminus\{\phi(y)\}$,
and $U_{\infty,2}$ maps onto $U'_0$,
while each $U_c$ maps bijectively onto $U_{\bar{\phi}(c)}$.
(Or onto $U'_1 \cup V_1$, if $\bar{\phi}(c)=1$.)
Thus, by equation~\eqref{eq:degsum},
$\phi$ must be injective on both $U_{\infty,1}$ and $U_{\infty,2}$.
Finally, we have seen that if $\zeta(\alpha,r)\in\Jul_{\phi}$,
then either $1<|\alpha|_v\leq |a|_v^{-1/2}$ and $1\leq r\leq |\alpha|_v$,
or else $|a|_v^{1/2}\leq |\alpha|_v \leq 1$ and $|a|_v \leq r \leq |\alpha|_v$.
Any such point lies within hyperbolic distance less than $-2\log|a|_v$
from $\zeta(0,1)$.  Since $\Jul_{\phi}$ is connected, it follows that
any point of $\Jul_{\phi}$ (not just those of type II or III)
also lies within that distance of $\zeta(0,1)$.
Thus, $\Jul_{\phi}$ has finite hyperbolic diameter,
proving the claim.
\end{proof}

Claim~\ref{claim:temp} confirms most of the hypotheses
of Theorem~B, including the first bullet point.
In addition, because $y$ is the only point in
$\{x\in \Jul_{\phi} : |x|_v = |a|_v^{-1/6}\}$, the
sets $U_{\infty,1}\smallsetminus\{y\}$ and $U_{\infty,2}$
are both connected, and $\deg_x(\phi)=3$ is constant on each.
Thus, $d_{\HH}(\phi(x),\phi(x')) = 3d_{\HH}(x,x')$
for any $x,x'\in U_{\infty,1}$, and similarly for $U_{\infty,2}$.
Since $U'_0$ and $U'_1$ map onto $U_{\infty,1}\cup U_{\infty,2}$,
while $U_{\bar{b}}$ maps onto $U_{\infty,1}\cup U_{\infty,2} \cup V_\infty$,
and every other $U_c$ maps onto $U_{\bar{\phi}(c)}$
(and hence eventually onto $U_{\infty,1}\cup U_{\infty,2} \cup V_\infty$),
we have verified the remaining hypotheses of Theorem~B.

Invoking part~(b) of the Theorem, then,
$$h_{\mu}(\phi) = \int_{\PBerk} \log(6/\deg_x(\phi)) \, d\mu(x)
=(\log 6) \mu(\DbarBerk(0,1))
+ (\log 2) \mu(U_{\infty,1}\cup U_{\infty,2}),$$
where $\mu=\mu_{\phi}$ is the invariant measure.
However, since $\phi:U_{\infty,1}\to\Jul_{\phi}$ is bijective,
with constant local degree $\deg_{x}(\phi)=3$
except at the one point $x=y$,
the defining property of $\Jac_{\phi}$ shows that
$\mu(U_{\infty,1}) = 1/2$.  Similarly,
$$\mu(U'_0) = \frac{1}{6}\mu(U_{\infty,1}\cup U_{\infty,2})
= \frac{1}{12} + \frac{1}{6}\mu(U_{\infty,2}),
\quad\text{and}\quad
\mu(U_{\infty,2}) = \frac{1}{2}\mu(U'_0).$$
Thus, $\mu(U_{\infty,2})=1/22$, and hence
$\mu(U_{\infty,1}\cup U_{\infty,2}) = 6/11$.
Substituting into the above formula therefore gives
$$h_{\mu}(\phi) = \frac{5}{11}\log 6 + \frac{6}{11}\log 2
= \log 2 + \frac{5}{11}\log 3.$$

We now compute the Gurevich entropy of the resulting
Markov shift on the set of symbols
$\calA = \{U_{\infty,1}, U_{\infty,2}, U'_0, U'_1, U_{\bar{b}}, \ldots\}$.
With that ordering of the symbols, the transition matrix
is the transpose of
$$
\begin{bmatrix}
1 & 0 & 1 & 1 & 1 & 0 & 0 & 0 & 0 & 0 & 0 & 0 & 0 & 0 & 0 & 0 & 0 & \cdots
\\
1 & 0 & 1 & 1 & 1 & 0 & 0 & 0 & 0 & 0 & 0 & 0 & 0 & 0 & 0 & 0 & 0 & \cdots
\\
1 & 1 & 0 & 0 & 0 & 0 & 0 & 0 & 0 & 0 & 0 & 0 & 0 & 0 & 0 & 0 & 0 & \cdots
\\
1 & 0 & 0 & 0 & 0 & 1 & 1 & 1 & 0 & 0 & 0 & 0 & 0 & 0 & 0 & 0 & 0 & \cdots
\\
1 & 0 & 0 & 0 & 0 & 0 & 0 & 0 & 1 & 1 & 1 & 0 & 0 & 0 & 0 & 0 & 0 & \cdots
\\
1 & 0 & 0 & 0 & 0 & 0 & 0 & 0 & 0 & 0 & 0 & 1 & 1 & 1 & 0 & 0 & 0 & \cdots
\\
1 & 0 & 0 & 0 & 0 & 0 & 0 & 0 & 0 & 0 & 0 & 0 & 0 & 0 & 1 & 1 & 1 & \cdots
\\
\vdots & \vdots & \vdots & \vdots & \vdots &
\vdots & \vdots & \vdots & \vdots & \vdots &
\vdots & \vdots & \vdots & \vdots & \vdots &
\vdots & \vdots & \ddots
\end{bmatrix}
$$
For notational convenience, we will identify each
state in $\calA$ with the number of its corresponding column;
that is, $U_{\infty,1}$ is $1$, $U_{\infty,2}$ is $2$,
$U'_{0}$ is $3$, $U'_{1}$ is $4$, $U_{\bar{b}}$ is $5$, etc.
We will compute $F_1(z)$, the generating function
for first-return loops at state~1.
As described in Section~\ref{sec:gurevich}, we can
compute $h_{\textup{Gur}}(T)$, and hence
$h_{\textup{top}}(\phi)$, from the roots of $1-F_1$.

To count first-return loops at $1$ of various lengths, let us consider
four separate types of such loops.
\begin{enumerate}
\item
The unique length one loop $1,1$,
giving a contribution of $z$ to $F_1(z)$.

\item
Loops beginning $1,2,\ldots$. The next state after $2$
must be $3$, followed by either $1$ or $2$. Thus,
we get one first-return loop of every
odd length at least $3$, of the form $1,2,3,2,3,\ldots,3,1$.
The type~(b) loops therefore contribute
$z^3 + z^5 + z^7 + \cdots = z^3/(1-z^2)$ to $F_1(z)$.

\item
Loops beginning $1,a$, where $a\in\{3,4,5\}$.
The next state must be either $1$ or $2$. Thus,
for each of $a=3,4,5$,
we get one first-return loop of every
positive even length, of the form $1,a,2,3,2,3,\ldots,3,1$
(where there could be zero copies of $2,3$).
The type~(c) loops therefore contribute
$3(z^2 + z^4 + z^6 + \cdots) = 3z^2/(1-z^2)$ to $F_1(z)$.

\item
Loops beginning $1,a$, where $a\geq 6$.
Any such state $a$ is followed by a unique path
of some length $k$ through states numbered $6$ and higher
to either state $4$ or $5$.
Call such a state $a$ a $k$-state. For each $k\geq 1$, there are $3^k$
$k$-states with paths to $4$, and $3^k$ with paths to $5$,
for a total of $2\cdot 3^k$ $k$-states.

After $k$ steps, each loop starting from a $k$-state
looks like the tail of one of type~(c), giving a loop of
length $k+2m$ for each $m\geq 1$ and each $k$-state.
Together, then, all $2\cdot 3^k$ $k$-states contribute
$2\cdot 3^k z^k (z^2 + z^4 + z^6 + \cdots)
= 2\cdot 3^k z^{k+2}/(1-z^2)$ to $F_1(z)$.
Summing across all $k\geq 1$, then, the type~(d) loops contribute

$\dsps \frac{6 z^3}{(1-z^2)} (1 + 3z + (3z)^2 + \cdots)
= \frac{6 z^3}{(1-z^2)(1-3z)}$ to $F_1(z)$.
\end{enumerate}

Adding up all four types' contributions, we have
$$F_1(z) = z + \frac{z^3 + 3z^2}{(1-z^2)} +
\frac{6 z^3}{(1-z^2)(1-3z)}
=\frac{z-3z^3}{(1-z^2)(1-3z)}.$$
Thus,
$$1- F_1(z)=\frac{(1-3z - z^2 + 3z^3) - (z-3z^3)}{(1-z^2)(1-3z)}
= \frac{1-4z - z^2 + 6z^3}{(1-z^2)(1-3z)}.$$
The root of $F_1(z)$ of smallest absolute value is $r=1/\lambda$,
where
$\lambda\approx 3.8558\ldots$ is the largest (real) root of the polynomial
$t^3 - 4t^2 - t + 6$.
Hence, as claimed in the introduction,
$$h_{\mu}(\phi) = \log 2 + \frac{5}{11} \log 3
\quad\text{and}\quad
h_{\textup{top}}(\phi) = \log \lambda.$$
%Furthermore, as noted in Section~\ref{sec:gurevich},
%since not only $F(z)$ but also $F'(z)$ is
%defined and finite for $|z|\leq 1/\lambda$, the Markov shift
%is positive recurrent, and there is a unique measure of maximal entropy.
%%which has constant Jacobian $1/\lambda$.

\begin{remark}
\label{rem:sextic}
The assumption that  the residue characteristic $p=\charact k$ of $\Cv$
is $3$ is essential to the computations of this section.
Indeed, if $p\neq 3$, then
$\deg_{x}(\phi)=1$ for all $x=\zeta(c,r)$ with
$1,r<|c|_v$, except possibly for $|c|_v=|a|_v^{-1/6}$.
As a result, $\phi$ is injective and preserves the hyperbolic distance
on all of the side branches off $I\smallsetminus\{\zeta(0,|a|_v^{-1/6})\}$.
Thus, outside of a certain measure zero set (consisting of
those $x\in\Jul_{\phi}$
for which all but finitely many iterates $\phi^n(x)$ lie in branches
off $|a|_v^{-1/6}$), the points of the leaf set $L$ of
Section~\ref{ssec:leaves6} lie at infinite
hyperbolic distance from $\zeta(0,1)$. Hence,
$\mu_{\phi}$-almost all points of $\calJ_{\phi}$
are of Type~I, and therefore $\mu_{\phi}(\HBer)=0$.
By the final statement of \cite[Th\'{e}or\`{e}me~D]{FRL}, then, we
have
$$h_{\mu}(\phi) = h_{\textup{top}}(\phi) = \log \deg\phi =\log 6.$$
Thus, $\mu=\mu_{\phi}$ is a measure of maximal entropy for $\phi$
if $p\neq 3$.
\end{remark}

{\bf Acknowledgements.}
Authors R.B., E.K., and O.M.\
gratefully acknowledge the support of NSF grant DMS-1201341.
Authors D.B.\ and R.C.\ gratefully acknowledge the support of
Amherst College's Dean of Faculty student support funds.
Author D.O.\
gratefully acknowledges the support of Amherst College's
Schupf Scholar program.
The authors thank Mike Boyle, Charles Favre, and Cesar Silva for their helpful
discussions, and the referee for a careful reading
and helpful suggestions and corrections.

\end{document}